\documentclass[a4paper]{scrartcl}

\usepackage{graphicx}
\usepackage[utf8]{inputenc}
\usepackage[english]{babel}
\usepackage{amsmath,amssymb,enumerate,url,appendix,tabularx,stmaryrd,mathrsfs}

\usepackage{todonotes,comment,afterpage}
\usepackage{subcaption}
\usepackage{framed}
\usepackage{mathtools}

\usepackage{amsthm}
\usepackage{graphicx}

\theoremstyle{plain}
\newtheorem{theorem}{Theorem}[section]
\newtheorem{lemma}[theorem]{Lemma}
\newtheorem{corollary}[theorem]{Corollary}
\newtheorem{proposition}[theorem]{Proposition}

\newtheorem{assumption}[theorem]{Assumption}

\newtheorem{definition}[theorem]{Definition}
\newtheorem{example}[theorem]{Example}

\newtheorem{remark}[theorem]{Remark}

\numberwithin{equation}{section}
\numberwithin{figure}{section}
\numberwithin{table}{section}

\newcommand{\R}{\mathbb{R}}
\newcommand{\C}{\mathbb{C}}
\newcommand{\N}{\mathbb{N}}

\newcommand{\bigO}{\mathcal{O}}

\newcommand{\eremk}{\hbox{}\hfill\rule{0.8ex}{0.8ex}}

\newcommand{\abs}[1]{\left|#1\right|}
\newcommand{\norm}[1]{\left\|#1\right\|}

\newcommand{\dom}{\operatorname{dom}}

\def\TT{\mathcal{T}}
\def\SS{\mathcal{S}}

\def\QQ{Q}
\def\ii{i}

\def\LL{\mathcal{L}}
\def\CC{\mathcal{C}}

\def\sector{\mathscr{S}}

\newcommand{\HHx}{\mathbb{V}_{h}}

\renewcommand{\Re}{\operatorname{Re}}
\renewcommand{\Im}{\operatorname{Im}}

\def\Nquad{\mathcal{N}_{\textrm{q}}}
\def\kquad{k}
\def\Nhpquad{\mathcal{N}_{\textrm{hp}}}
\def\Ndof{\mathcal{N}_{\Omega}}

\def\uu{u}
\def\uqh{u^{q,h}}
\def\uq{u^{q}}
\def\uui{u_i}
\def\uiq{u_i^{q}}
\def\uiqq{u_i^{q,q}}
\def\uiqqh{u_i^{q,q,h}}

\def\uufd{u^{\mathrm{fd}}}

\iftrue 
\usepackage{tikz}
\usetikzlibrary{spy}

\usetikzlibrary{shapes}
\usepackage{pgfplots}
\usetikzlibrary{calc,spy}
\newcommand{\includeTikzOrEps}[1]{ \include{figures/#1}}  


\else
\newcommand{\includeTikzOrEps}[1]{\includegraphics{figures_pdf/#1}}
\fi

\begin{document}
\title{ An exponentially convergent discretization for space-time fractional parabolic equations using $hp$-FEM  }

\author{%
  Jens Markus Melenk\thanks{Institute for Analysis and Scientific Computing, Technische Universit\"at Wien,
    Wiedner Hauptstrasse 8-10, 1040 Vienna, Austria. melenk@tuwien.ac.at} and %
  Alexander Rieder\thanks{ Institute for Analysis and Scientific Computing, Technische Universit\"at Wien,
   Wiedner Hauptstrasse 8-10, 1040 Vienna, Austria. alexander.rieder@tuwien.ac.at}  
 }


\date{\today}

\maketitle
{\vspace{5mm}\centering \textbf{  Dedicated to Professor Christoph Schwab on the occasion of his $60^{th}$ birthday.\vspace{5mm}}\par}

\begin{abstract}
  {We consider a space-time fractional parabolic problem. Combining a sinc-quadrature based method for discretizing
  the Riesz-Dunford integral with $hp$-FEM in space yields an exponentially convergent scheme for the initial boundary value problem
  with homogeneous right-hand side. For the inhomogeneous problem, an $hp$-quadrature scheme is implemented.
  We rigorously prove exponential convergence with focus on small times $t$, proving robustness with respect to
  startup singularities due to data incompatibilities.}
{fractional diffusion, sinc-quadrature, Mittag-Leffler, Riesz-Dunford, $hp$-FEM}
\end{abstract}

\section{Introduction}
The study of fractional partial differential equations has attracted a lot of interest in the mathematical community in recent years.
Motivated by processes in physics or finance, it has become necessary to leave the realm of classical derivatives, and one encounters  new
difficulties, most notably one introduces non-local aspects to the problems under consideration~\cite{collection_of_applications}.

In the context of numerical approximation, several different approaches have been proposed to handle fractional diffusion problems,
although often they focused on the stationary elliptic case.
For the stationary case, we mention those based on a Caffarelli-Silvestre extension~\cite{nos15,tensor_fem,mpsv18}, the integral fractional Laplacian~\cite{ab17,fmms21}
and the Riesz-Dunford (also known as Riesz-Taylor and Dunford-Taylor) functional calculus~\cite{bonito_pasciak_parabolic,blp17,tensor_fem_on_polygons}.
See~\cite{bbnos18} for a summary of discretization methods for the elliptic fractional diffusion problem.
Another recent approach for discretizing both the fractional Laplace and  heat equation is based on a reduced basis method~\cite{danczul2019reduced,danczul2020reduced}.
Further methods that are based on discretizing first and computing fractional powers of the resulting stiffness
  matrix include~\cite{HHT_08,GHK_05,HLMMV_18,HLMMP_20}; see also the discussion in~\cite{ClHofreit}.

For fractional ODE problems, the most common formulation is based on the Caputo derivative due to its natural behavior with respect to initial conditions.
As for numerical approximation, it is common to use time-stepping methods~\cite{noes16}, but  especially when already dealing with fractional operators in space,
it is very natural to use a Riesz-Dunford based formulation and apply an appropriate sinc-quadrature~\cite{bonito_pasciak_parabolic}. Recently, a slight modification of the sinc-quadrature scheme based on a double exponential
  transformation has been proposed~\cite{de_quad}. This modified scheme can also be combined
  with the $hp$-FEM techniques from this article to obtain a very fast numerical scheme.

For the spatial discretization, finite element based-methods are popular. Most results focus on the $h$-version,
designing schemes which provide algebraic convergence rates, under suitable compatibility conditions on the given data.
Lately, $hp$-FEM based schemes have started to appear. First, only for the extended variable in a  Caffarelli-Silvestre based discretization scheme~\cite{mpsv18},
then for the full discretization of an elliptic fractional problem~\cite{tensor_fem,tensor_fem_on_polygons}. Unlike previous results, \cite{tensor_fem} also removed the
compatibility conditions on the initial data, relying instead on the fact that appropriately designed $hp$-FEM spaces can resolve the developing singularities at the boundary.

In the context of time-dependent problems, $hp$-FEM based approaches have been pioneered in~\cite{hp_for_heat}, again focusing on an extension-based formulation for the fractional Laplacian
and restricted to the classical first order derivative in time. It is proven for smooth data and geometries (but without compatibility assumption) that an $hp$-discretization can
deliver exponential convergence towards the exact solution.
The present work consists of transferring the methods from~\cite{hp_for_heat} to a functional calculus based discretization in the spirit of~\cite{bonito_pasciak_parabolic}, generalizing
the problem class to the case of (Caputo) fractional time derivatives in the process. We prove that for smooth geometries in 1D or 2D, one can design meshes and approximation spaces
such that the proposed method features exponential convergence to the exact solution, even in the presence of startup singularities.
Most of the paper is concerned with analyzing the convergence in the pointwise-in-time $\widetilde{H}^{\beta}(\Omega)$ Sobolev-norm, which is the natural setting for the model problem~\eqref{eq:model_problem} and
the considered numerical method. For small times $t$, these estimates suffer from a degeneracy at small times $t>0$. In order to remedy this, we consider an appropriate space-time energy norm and prove that, given an additional abstract assumption on the initial condition, one can expect robust convergence in this  weaker norm.

Compared to~\cite{blp17}, we also improve the time dependence of the estimates for the sinc-quadrature error from $t^{-\gamma}$
to $t^{-\gamma/2}$. Similar behavior for the discretization errors was also observed in the $\gamma=1$ case in~\cite{hp_for_heat} and
is well established for discretizations of the heat-equation~\cite{thomee_book}.

We close with a short comment on notation. We write $a \lesssim b$ if there exists a constant $C>0$, which is independent of the main quantities of interest (for example the number of quadrature
points, the mesh size, polynomial degree employed or the time $t$) such that $a \leq C b$. We write $a \sim b$ if $a \lesssim b$ and $b \lesssim a$. The specific dependencies of the implied constants
are stated in the context.

\section{Model Problem and notation}
We consider the  numerical approximation of the following time-dependent problem.
Working on a bounded Lipschitz domain $\Omega \subseteq \R^d$,
we fix $\gamma, \beta \in (0,1]$ and a  final time $T>0$.
Given an initial condition $u_0 \in L^2(\Omega)$
and right-hand side $f \in L^{\infty}((0,T),L^2(\Omega))$ we seek $u:[0,T]\to \R$ satisfying
  \begin{align}
  \label{eq:model_problem}
    \partial^{\gamma}_t u + \LL^{\beta} u &= f \;\, \text{in } \Omega \times [0,T],   \qquad
    u|_{\partial \Omega}=0,  \qquad
    u(0)=u_0 \;\, \text{in } \Omega .
  \end{align}
The operator $\LL u :=-\operatorname{div}(A \nabla u) + c u $ is linear, elliptic and self adjoint. We assume that the given coefficients satisfy that
$A \in L^{\infty}(\Omega,\R^{d\times d})$ is uniformly symmetric, positive definite and $c \in L^{\infty}(\Omega)$ satisfies $c\geq 0$.
The fractional power $\LL^\beta$ is given using the spectral representation
\begin{align}
  \LL^\beta u:=\sum_{j=0}^{\infty} \lambda_j^\beta (u,\varphi_j) \varphi_j,
\end{align}
where $(\lambda_j,\varphi_j)_{j=0}^{\infty}$ are the eigenvalues and eigenfunctions of the operator $\LL$ with homogeneous Dirichlet boundary conditions;
as is standard, the eigenfunctions are $L^2(\Omega)$-orthonormalized.
The homogeneous Dirichlet boundary condition is to be understood in the sense that $u(t) \in \dom(\LL^{\beta})$.

For $\gamma \in (0,1)$, the fractional time derivative is taken in the sense of Caputo, i.e.,
\begin{align*}
  \partial_t^\gamma u(t) := \frac{1}{\Gamma(1-\gamma)} \int_{0}^{t}{\frac{1}{(t-r)^\gamma} \frac{\partial u(t)}{\partial r} \,dr},
\end{align*}
whereas for $\gamma=1$, $\partial^1_t:=\partial_t$ is the classical derivative.

As was shown in~\cite{bonito_pasciak_parabolic}, the exact solution  to (\ref{eq:model_problem})
can be written in the following form using the
Mittag-Leffler function $e_{\gamma,\mu}$ (see \eqref{eq:mittag_leffler} for the precise definition):
\begin{align*}
  u(t)&=E(t) u_0 + \int_{0}^{t} { W(\tau) f(t-\tau)\,d\tau} := e_{\gamma,1}\big(-t^{\gamma} \LL^{\beta}\big) u_0 + \int_{0}^{t} { \tau^{\gamma-1} e_{\gamma,\gamma}\big(-\tau^{\gamma} \LL^\beta\big) f(t-\tau)\,d\tau},
\end{align*}
where we used the following functional calculus based on the spectral decomposition:
\begin{align*}
  e_{\gamma,\mu}\big(-t^{\gamma} \LL^{\beta}\big)v:=\sum_{j=0}^{\infty}
  {e_{\gamma,\mu}(-t^\gamma \lambda_j^\beta) (v,\varphi_j)_{L^2(\Omega)}\, \varphi_j} \qquad \forall v \in L^2(\Omega).
\end{align*}
An alternative representation, which will prove more amenable to numerical discretization is based on the Riesz-Dunford calculus:
\begin{align}
  \label{eq:riest_dunford}
  e_{\gamma,\mu}\big(-t^{\gamma} \LL^{\beta}\big) v
  &=\frac{1}{2\pi \ii}\int_{\CC}{e_{\gamma,\mu}(-t^\gamma z^\beta) \, \big(z - \LL\big)^{-1} v \,dz} \qquad \forall v \in L^2(\Omega),
\end{align}
where the contour $\CC$ is taken to be parameterized by
\begin{align}
      z(y):=b (\cosh(y) + \ii \sinh(y)) \quad \text{ for $y \in \R$}. \label{eq:def_of_z}
    \end{align}  
The parameter $b>0$ is chosen sufficiently small so that $z(y)$, $y \in \R$, is in the sector $\sector$ given in Definition~\ref{def:domain_of_ellipticity}.

The natural spaces for formulating our results are given by two scales of interpolation spaces, $H^{\theta}(\Omega)$ and $\widetilde{H}^{\theta}(\Omega)$. To that end, we define for $\theta \in [0,1]$:
\begin{align*}
  \mathbb{H}^{\theta}(\Omega)&:=\Big\{ u \in L^2(\Omega): \; \sum_{j=0}^{\infty}{\lambda_j^\theta |(u,\varphi_j)_{L^2(\Omega)}|^2 }< \infty \Big\}.
\end{align*}
Another way of introducing spaces between $L^2$ and $H^1_0$ is given by the real interpolation method. That is, given two
Banach spaces $X_1 \subseteq X_0$
with continuous embedding, we define for $\theta\in (0,1)$:
\begin{align*}
     \norm{u}^2_{[X_0,X_1]_{\theta,2}}
  &:= \int_{t=0}^\infty{ t^{-2\theta} \left( \inf_{v \in X_1} \|u - v\|_{0} + t \|v\|_1\right)^2 \frac{dt}{t}}, \\
    \left[X_0,X_1\right]_{\theta,2}&:=\left\{u \in X_0: \norm{u}_{[X_0,X_1]_{\theta,2}} < \infty \right\}.
\end{align*}
For $\theta=0$ and $\theta=1$ we take the convention that $[X_0,X_1]_{\theta,2}=X_{\theta}$.
Using this notation, we define the fractional Sobolev spaces:
\begin{align*}
  H^\theta(\Omega):=\big[L^2(\Omega), H^1(\Omega)\big]_{\theta,2}, \qquad    \widetilde{H}^\theta(\Omega):=\big[L^2(\Omega), H_0^1(\Omega)\big]_{\theta,2}.
\end{align*}

It is well known, that for $\theta \in [0,1]$ the spaces $\widetilde{H}^\theta(\Omega)$ and $\mathbb{H}^{\theta}(\Omega)$ coincide with equivalent norms; see~\cite{tartar}. We will therefore use whichever definition is more convenient. Most notably we have 
\begin{align*}
  \norm{u}_{\widetilde{H}^{\theta}(\Omega)}^2
  &\sim \sum_{j=0}^{\infty}{\lambda_j^\theta |(u,\varphi_j)_{L^2(\Omega)}|^2 }.
\end{align*}
We will need the following multiplicative interpolation estimate (see~\cite[Sect.~{1.3.3}]{Triebel1999}):
\begin{align}
  \label{eq:interpolation_est}
  \norm{u}_{[X_0,X_1]_{\theta,2}}&\leq C_{\theta} \norm{u}_{X_0}^{1-\theta} \norm{u}_{X_1}^{\theta} \qquad \forall u \in X_1, \theta \in [0,1].
\end{align}
Throughout, we take inner products to be antilinear in the second argument.

\subsection{The Mittag-Leffler function}
Since it plays a major role in the numerical method, we briefly introduce the Mittag-Leffler function and
summarize its most important properties. See for example~\cite[Sect.~{1.8}]{ksh06} for a more detailed treatment.
Given parameters $\gamma>0$, $\mu \in \R$, the Mittag-Leffler function is analytic on $\C$ and given by the power series
\begin{align}
  \label{eq:mittag_leffler}
  e_{\gamma,\mu}(z):=\sum_{n=0}^{\infty}{\frac{z^n}{\Gamma(n\gamma+\mu)}}.
\end{align}

For $0 < \gamma < 1, \mu \in \R$ and $\frac{\gamma \pi}{2} < \zeta < \gamma \pi$, there exists a constant $C$ only depending on $\gamma,\mu,\zeta$ such that
\begin{align}
  \label{eq:mittag_leffler_est}
  \abs{e_{\gamma,\mu}(z)}\leq \frac{C}{1+\abs{z}} \qquad \text{for } \zeta \leq \abs{\operatorname{Arg}{z}} \leq \pi.
\end{align}
For $\gamma=\mu=1$, the Mittag-Leffler function $e_{1,1}$ is the usual exponential function. Estimate~\eqref{eq:mittag_leffler_est}
also holds in this case for $ \pi/2<\zeta < \pi$. 

The derivative of the Mittag-Leffler function can be expressed as:
\begin{align}
  \label{eq:dt_of_ml}
  \frac{d}{dt} e_{\gamma,1}(-t^{\gamma} \lambda^{\beta})
  &=-\lambda^{\beta}\,t^{\gamma-1} e_{\gamma,\gamma}(-t^\gamma \lambda^\beta).
\end{align}

\subsection{ Assumptions on the discretization in space}
When considering the Riesz-Dunford representation of $u$, the contour lies in the set of values for which $\LL - z$ is
invertible. Therefore we consider the set of complex numbers up to a cone
  which contains
an interval $[a,\infty) \subseteq \R$ that in turn contains the eigenvalues of $\LL$. 
\begin{definition}
\label{def:domain_of_ellipticity}
Let $C_P$ denote the Poincar\'e constant  of $\Omega$,
i.e., the smallest constant such that $\|u\|_{L^2(\Omega)} \leq C_{P} \|\nabla u\|_{L^2(\Omega)}$
  for all $u \in H_0^1(\Omega)$.
With $\lambda_{\min}(A)$ the smallest eigenvalue of $A$
and 
fixed $0 < \varepsilon_0 < z_0 \leq \min\left(\frac{\lambda_{\min}(A)}{2 C_{\text{P}}},1\right)^2$, we define 
\begin{align*}
  \sector:=\C \setminus \left[\left\{ z_0 + z: \abs{\operatorname{Arg}(z)}\leq \frac{\pi}{8}, \Re(z)\geq 0 \right\} 
  \cup B_{\varepsilon_0}(0)\right].
\end{align*}
\end{definition}
\begin{remark}
  The set $\sector$ is chosen in such a way that it contains the contour $\mathcal{C}$ used in the functional
  calculus, given by $z(y):=b\big(\cosh(y)+ \ii \sinh(y)\big)$.  See Figure~\ref{fig:contour}.
\eremk
\end{remark}

\begin{figure}
  \center
  \includeTikzOrEps{contour}
  \caption{Geometric configuration of Definition~\ref{def:domain_of_ellipticity}}
  \label{fig:contour}
\end{figure}

  An important role will be played by the
  resolvent operator  $R(z):=(z-\LL)^{-1}$
  and its discrete counterpart, where we replace it with a Galerkin solver $R_h(z)$.
  Let $\HHx \subseteq H^1_0(\Omega)$ be a closed subspace.
  Recalling that $\LL u =-\operatorname{div}(A \nabla u) + c u $, 
    we define  $R_h(z) f:=u_h$ as the solution $u_h \in \HHx$ satisfying
  \begin{align}
    \label{eq:def_disc_resolvent}
    \big((z-c) u_h,v_h\big)_{L^2(\Omega)} - \big(A\nabla u_h,\nabla v_h\big)_{L^2(\Omega)}&=\left(f,v_h\right)_{L^2(\Omega)} \qquad \forall  v_h \in \HHx.
  \end{align} 

  For $z$ in the set $\sector$ the following stability estimate holds for the
  resolvent operator:
  \begin{proposition}
    \label{prop:resolvent_estimates_l2}
    Let $z \in \sector$ and $u_0 \in L^2(\Omega)$.
    Then, for $\alpha \in [0,1]$:
    \begin{align}
      \label{eq:resolvent_estimates_l2}
      \norm{R(z) u_0}_{\widetilde{H}^{\alpha}(\Omega)}
      &\lesssim \abs{z}^{-1+\frac{\alpha}{2}} \norm{u_0}_{L^2(\Omega)} \quad  \text{and}\quad
        \norm{R_h(z) u_{h,0}}_{\widetilde{H}^{\alpha}(\Omega)}
        \lesssim \abs{z}^{-1+\frac{\alpha}{2}} \norm{u_{h,0}}_{L^2(\Omega)}.
  \end{align}    
  \end{proposition}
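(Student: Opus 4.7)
The plan is to diagonalize both operators in their respective (continuous and discrete) eigenbases, reduce both estimates to a sectorial distance bound on $\mathbb{C}$, prove the two endpoint cases $\alpha=0$ and $\alpha=1$ by a direct sup-type argument, and then interpolate using \eqref{eq:interpolation_est}.

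The geometric lemma I would establish first is: for every $z \in \sector$ and every $\lambda\geq \lambda_0$, where $\lambda_0>0$ denotes the smallest eigenvalue of $\LL$, one has $|z-\lambda|\gtrsim |z|+\lambda$ with an implicit constant depending only on the opening angle and the radius $\varepsilon_0$ appearing in Definition~\ref{def:domain_of_ellipticity}. The choice of $z_0$ ensures $\lambda_0>z_0$, so $\lambda$ is uniformly separated from the excluded ball, and the $\pi/8$-cone geometry around $[z_0,\infty)$ gives the standard sectorial separation. I would check two regimes ($\lambda\leq 2|z|$ and $\lambda\geq 2|z|$) to absorb the geometry into a single constant.

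Given this, the continuous estimate follows immediately: writing $R(z)u_0 = \sum_j (z-\lambda_j)^{-1}(u_0,\varphi_j)\varphi_j$ and using $\|v\|_{\widetilde H^\alpha}^2\sim\sum_j\lambda_j^\alpha|(v,\varphi_j)|^2$, one obtains
\begin{align*}
  \|R(z)u_0\|_{\widetilde H^\alpha}^2 \;\lesssim\; \sup_{\lambda\geq\lambda_0}\frac{\lambda^\alpha}{(|z|+\lambda)^2}\,\|u_0\|_{L^2}^2.
\end{align*}
An elementary calculus shows the supremum is of order $|z|^{\alpha-2}$, giving the claimed bound. Alternatively, I would just verify the endpoints $\alpha=0$ (use $|z-\lambda_j|^{-1}\lesssim |z|^{-1}$) and $\alpha=1$ (use $\lambda/(|z|+\lambda)^2\leq 1/|z|$), and then apply the interpolation inequality \eqref{eq:interpolation_est} with $X_0=L^2(\Omega)$, $X_1=H_0^1(\Omega)$.

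For the discrete resolvent, the same diagonalization works in the basis of discrete eigenfunctions $\{\varphi_{j,h}\}$ of the Galerkin operator associated with \eqref{eq:def_disc_resolvent}: the discrete eigenvalues $\lambda_{j,h}$ inherit the lower bound $\lambda_{j,h}\geq \lambda_1\geq\lambda_0$ from the min-max principle, so the same sectorial bound applies, yielding $\|R_h(z)u_{h,0}\|_{L^2}\lesssim |z|^{-1}\|u_{h,0}\|_{L^2}$. For the $\widetilde H^1$ endpoint one tests \eqref{eq:def_disc_resolvent} with $v_h=R_h(z)u_{h,0}$ and exploits coercivity of the bilinear form $(A\nabla\cdot,\nabla\cdot)+(c\cdot,\cdot)$ in combination with the $L^2$-bound to get $\|R_h(z)u_{h,0}\|_{H^1_0}\lesssim |z|^{-1/2}\|u_{h,0}\|_{L^2}$. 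Since $R_h(z)u_{h,0}\in\HHx\subseteq H_0^1(\Omega)$, the multiplicative interpolation estimate \eqref{eq:interpolation_est} closes the argument for intermediate $\alpha$.

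The main technical hurdle is the sectorial distance bound: the excluded region in Definition~\ref{def:domain_of_ellipticity} is not quite a standard sector (it is a shifted cone together with a disk), so showing uniform $|z-\lambda|\gtrsim |z|+\lambda$ for $\lambda\geq\lambda_0$ requires splitting into the small-$|z|$ and large-$|z|$ regimes and exploiting the specific relation between $\varepsilon_0$, $z_0$, and $\lambda_0$ encoded in the Poincar\'e inequality. Once that is in place, everything else is algebra and interpolation.
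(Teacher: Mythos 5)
Your proposal is correct, but it reaches the endpoint estimates by a genuinely different route than the paper. The paper never diagonalizes: it runs a single variational argument for both $R$ and $R_h$, testing \eqref{eq:def_disc_resolvent} with $v_h=\overline{\beta}u$ for a unimodular $\beta$ chosen according to the sign of $\Im(z)$ (namely $\beta=e^{\mp\ii(\pi-\delta)/2}$) so that $\Re(\beta)>0$ and $-\Re(\beta z)>0$ hold simultaneously, with the small-$\abs{z}$ regime absorbed via the Poincar\'e inequality; this yields the $\alpha=0$ and $\alpha=1$ bounds at once and then interpolates, exactly as you do at the end. You instead reduce the continuous case to the scalar sectorial bound $\abs{z-\lambda}\gtrsim\abs{z}+\lambda$ on the spectrum, which is valid here (and your two-regime split $\lambda\lessgtr 2\abs{z}$ is the right way to prove it — this separation of $\operatorname{spec}(\LL)$ from the excluded cone and ball is precisely what Definition~\ref{def:domain_of_ellipticity} is engineered to provide); this has the advantage of giving all $\alpha\in[0,1]$ in one line from the supremum $\sup_{\lambda\geq\lambda_0}\lambda^{\alpha}(\abs{z}+\lambda)^{-2}\lesssim\abs{z}^{\alpha-2}$, without interpolation. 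For the discrete operator your argument is a hybrid: the $L^2$ endpoint via the discrete eigenbasis (legitimate, since conforming Galerkin plus min--max gives $\lambda_{j,h}\geq\lambda_0$, and $R_h(z)f=(z-\LL_h)^{-1}P_hf$ with $\|P_hf\|_{L^2}\leq\|f\|_{L^2}$), and the $H^1$ endpoint by testing with $u_h$ itself and feeding in the already-established $L^2$ bound — a correct simplification that sidesteps the paper's rotation trick, since $\|A^{1/2}\nabla u_h\|_{L^2}^2\leq\abs{z}\|u_h\|_{L^2}^2+\|u_{h,0}\|_{L^2}\|u_h\|_{L^2}\lesssim\abs{z}^{-1}\|u_{h,0}\|_{L^2}^2$. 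What the paper's purely variational proof buys is uniformity: the continuous and discrete cases are literally the same computation, and nothing hinges on self-adjointness or on discrete spectral theory; what yours buys is a shorter, more transparent continuous case and an explicit identification of the one geometric fact everything rests on.
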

  \begin{proof}
    This is basically~\cite[Lem.~{A.2}]{hp_for_heat} with the substitution $\zeta=-z+c$. We include the proof for completeness.
    
    Writing $u:=R_{h}(z)u_{0}$, and testing in \eqref{eq:def_disc_resolvent} with $v_h:= \overline{\beta} u$  for $\beta \in \C$ with $\abs{\beta}=1$ to be chosen later, we can show
    \begin{multline}
      \Re(\beta)\big(\|{A^{1/2}\nabla u}\|_{L^2(\Omega)}^2
      + \|{c^{1/2} u}\|_{L^2(\Omega)}^2 \big)- \Re(\beta z) \norm{u}^2_{L^2(\Omega)} \\
    \lesssim \abs{(u_{0},u)_{L^2(\Omega)}}
      \leq \big( \abs{z}^{-1/2}\norm{u_{0}}_{L^2(\Omega)}\big)\big( \abs{z}^{1/2}\norm{u}_{L^2(\Omega)}\big).
      \label{eq:resolvent_stability}
    \end{multline}   
    For small $\abs{z}<2z_0$, we can use $\beta=1$ and bound using the Poincar\'e estimate:
    $$
    \abs{z} \norm{u}^2_{L^2(\Omega)}\leq
    2z_0 C_{P}^2 \|\nabla u\|_{L^2(\Omega)}^2 < \frac{1}{2}\|A\nabla u\|_{L^2(\Omega)}^2,
    $$
    to easily get the a priori estimate for $\alpha \in \{0,1\}$.
    
    For larger $\abs{z}>2 z_0$, we may assume that $|\operatorname{Arg}(z)|\geq \delta >0$ for some $\delta >0$.
    (Graphically speaking, we exclude a slightly thinner cone starting at $0$ instead
    of the cone starting at $z_0$.)
    It is then sufficient, if we can pick $\beta$ such that $\Re(\beta) >0$ and
    $\operatorname{Arg}(-z \beta)\in (-\pi/2,\pi/2)$.
    Just as in~\cite[Lem.~{A.2}]{hp_for_heat}
    one can check that if $\Im(z)\leq 0$, $\beta:=e^{-\ii \frac{\pi-\delta}{2}}$ satisfies the
    necessary conditions that $\Re(\beta)>0$ and $-\Re(\beta z)>0$.
    If $\Im(z)\geq 0$, then $\beta:=e^{\ii \frac{\pi-\delta}{2}}$ does.
    This shows as before $\|R_h(z)u_0\|_{H^1(\Omega)} \lesssim |z|^{-1/2} \|u_0\|_{L^2(\Omega)}$ and also $\|R_h(z) u_0\|_{L^2(\Omega)} \lesssim |z|^{-1} \|u_0\|_{L^2(\Omega)}.$
  
    By interpolation, we deduce that for $\alpha \in [0,1]$:
    \begin{align}
      \label{eq:resolven_estimate_int1}
      \norm{R_h(z) u_{0}}_{\widetilde{H}^{\alpha}(\Omega)}\lesssim \abs{z}^{-1+\alpha/2}  \norm{u_{0}}_{L^2(\Omega)}.
    \end{align}

    The discrete estimate follows verbatim.
  \end{proof}

\begin{definition}
  \label{def:uniformly_analytic}
  A function $f: [0,T] \to L^\infty(\Omega)$ is said to be \emph{uniformly analytic} if:
  \begin{enumerate}[(i)]
  \item For all $t\in [0,T]$, $f(t)$ is analytic in a fixed neighborhood  $\widetilde{\Omega}$ of $\overline{\Omega}$;
  \item there exist constants $C_f,\gamma_f >0$, the \emph{analyticity constants of $f$}, such that for all $t\in [0,T]$ and $p \in \N_0$,
    \begin{align*}
      \norm{\nabla^{p} f(t)}_{L^{\infty}(\widetilde{\Omega})}&\leq C_f \gamma^p p !\,;
    \end{align*}
  \item
    \label{it:uniformly_holomorphic_iii}
    
      there exists an open set $\mathcal{O} \subseteq \C$,
       containing a positive sector $\{ z \in \C: z\neq 0, \abs{\operatorname{Arg}{z}} < \delta \}$ for some constant $\delta > 0$,
      and there exists a constant $C_f >0$ such that,
      for each $v \in L^2(\Omega)$, the function
      $t \mapsto (f(t),v)_{L^2(\Omega)}$ has an analytic extension to $\mathcal{O}$
      and
      \begin{align*}
        \sup_{s \in \mathcal{O}} \frac{|(f(s),v)_{L^2(\Omega)}|}{\norm{v}_{L^2(\Omega)}} \leq C_f <\infty.
      \end{align*}
      
  \end{enumerate}
\end{definition}
The main assumption on the discretization space $\HHx$ is taken such that it approximates the solutions to  singularly perturbed problems exponentially well.
The same assumption has already been used in~\cite{hp_for_heat}.
\begin{assumption}
  \label{ass:approx_of_HHx}  
  A function space $\HHx$ is said to resolve down to the scale $\varepsilon >0 $ if
  for all $z \in \sector$ with $\abs{z}^{-1/2} \geq \varepsilon$  
  and for all $f \in L^2(\Omega)$ that are analytic on a fixed neighborhood $\widetilde{\Omega}$ of $\overline{\Omega}$, 
  the solutions to the elliptic problem
  \begin{align*}
    -z^{-1} \LL u  + u &= f
  \end{align*}
  can be approximated exponentially well from it. That is, 
  there exist constants $C(f)$, $\omega$ and $\mu>0$ such that
  \begin{align*}
    \inf_{v_h \in \HHx}\left[
    \abs{z}^{-1}\norm{\nabla u -  \nabla v}^2_{L^2(\Omega)} + \norm{u-v}^2_{L^2(\Omega)}\right]&\lesssim C(f) e^{-\omega \Ndof^\mu},
  \end{align*}
  where $\Ndof:=\operatorname{dim}(\HHx)$.
  The constant $C(f)$ may depend only on $\widetilde{\Omega}$, the analyticity constants of  $f$,
  on $A$, $c$, $\Omega$, $z_0$, and $\varepsilon_0$, while the constants $\omega$ and $\mu$ may depend only on $A$, $c$, $\widetilde{\Omega}$, $\Omega$,
  $z_0$ and $\varepsilon_0$
  Most  notably the constants are independent of $z$, $\varepsilon$, and $\Ndof$.  
\end{assumption}

We state and prove all our results under the general assumption that $\HHx$ resolves specific scales. In Section~\ref{sect:hp_fem}, we will give one 
construction of such a space using $hp$ finite elements; see also~\cite{tensor_fem} and~\cite{tensor_fem_on_polygons} for similar constructions,
focused on real valued parameters $z$.

\section{The pure initial value problem}
\label{sect:homogeneous_problem}
We first focus on discretizing the homogeneous problem, i.e.,  we only consider the case $f=0$. In this case, the exact solution can be written as 
\begin{align}
  \uu(t)  &:=\frac{1}{2 \pi \ii}\int_{\CC}{e_{\gamma,1}(-t^\gamma z^\beta) \, \big(z - \LL\big)^{-1} u_0 \,dz}.
\end{align}
We discretize this function in two steps. First, we replace the contour integral with a sinc-quadrature rule, analogous to what was done in~\cite{bonito_pasciak_parabolic}.
For a fixed number of quadrature points $\Nquad \in \N$ and grid size $\kquad>0$, we write, setting $y_n:=n k$:
\begin{align}
  \uq(t)&:=\frac{k}{2\pi \ii} \sum_{n=-\Nquad}^{\Nquad}{e_{\gamma,1}\big(-t^\gamma z(y_n)^\beta\big) \, z'(y_n)\big(z(y_n) - \LL\big)^{-1} u_0}.
\end{align}
As a second step, we replace the resolvent operator $R(z)=(z-\LL)^{-1}$ by the discrete
counterpart $R_h(z)$ as defined in~\eqref{eq:def_disc_resolvent}. We obtain the fully discrete approximation
\begin{align}
  \label{eq:fully_discrete}
    \uqh(t)&:=\frac{k}{2\pi \ii } \sum_{n=-\Nquad}^{\Nquad}{e_{\gamma,1}\big(-t^\gamma z(y_n)^\beta\big) \, z'(y_n) R_h\big(z(y_n)\big) u_0}.
\end{align}

\begin{lemma}
  \label{lemma:hom:est_space_discr}
  Assume that $u_0$ is analytic on a neighborhood of the closure of $\Omega$.
  Let $\Nquad \in \N$, $\kquad>0$ be given. Let $\varepsilon_0 < b<z_0$,
  with $\varepsilon_0$ and $z_0$ as defined in Definition~\ref{def:domain_of_ellipticity} and $b$ as in \eqref{eq:def_of_z}.
  Then, if 
   $\HHx$ resolves the scales down to $\varepsilon = b^{-1/2} e^{- \frac{\Nquad \kquad}{2}} $, the error due to the spatial discretization can be bounded by
  \begin{align}
    \norm{\uq(t) - \uqh(t)}_{\widetilde{H}^{\beta}(\Omega)}\leq C t^{-\gamma/2}\, \Nquad \kquad\,  e^{- \omega \Ndof^\mu}.
  \end{align}
  The constants $C$, $\omega$, $\mu$ depend on the constants from Assumption~\ref{ass:approx_of_HHx} and on the initial condition $u_0$.
\end{lemma}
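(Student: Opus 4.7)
The plan is to estimate the difference pointwise at each of the $2N_q+1$ quadrature nodes and then sum. Writing
\begin{align*}
u^q(t) - u^{q,h}(t) = \frac{k}{2\pi \ii} \sum_{n=-\Nquad}^{\Nquad} e_{\gamma,1}\big(-t^\gamma z(y_n)^\beta\big)\, z'(y_n)\, \big[R(z(y_n)) - R_h(z(y_n))\big] u_0,
\end{align*}
I would apply the triangle inequality in $\widetilde{H}^\beta(\Omega)$ and bound each summand by a product of three factors: the Mittag-Leffler prefactor, the contour derivative $|z'(y_n)| = |z(y_n)|$ (a direct consequence of $z(y)=b(\cosh y+\ii \sinh y)$), and the resolvent approximation error.

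For the resolvent error I would rewrite $R(z)u_0 = z^{-1} u$ with $u$ the solution of $-z^{-1}\LL u + u = u_0$, which is exactly the problem appearing in Assumption~\ref{ass:approx_of_HHx}; analogously $R_h(z)u_0 = z^{-1} u_h$. Since $u_0$ is analytic on a neighborhood of $\overline{\Omega}$, the assumption applies at every quadrature node $z(y_n)$ provided $|z(y_n)|^{-1/2}\geq \varepsilon$, and the choice $\varepsilon = b^{-1/2} e^{-\Nquad \kquad/2}$ is calibrated precisely so that this holds at the outermost nodes, where $|z(y_n)|\sim b\, e^{|y_n|}$. A standard Céa-type argument based on the coercivity estimates derived in the proof of Proposition~\ref{prop:resolvent_estimates_l2} yields $\|u-u_h\|_{L^2(\Omega)} \lesssim e^{-\omega \Ndof^\mu/2}$ and $\|\nabla(u-u_h)\|_{L^2(\Omega)} \lesssim |z|^{1/2} e^{-\omega \Ndof^\mu/2}$, with implied constants depending on the analyticity constants of $u_0$. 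Dividing by $z$ and interpolating in $\widetilde{H}^\beta(\Omega)$ via \eqref{eq:interpolation_est} gives
\begin{align*}
\big\|\big(R(z)-R_h(z)\big)u_0\big\|_{\widetilde{H}^\beta(\Omega)} \lesssim |z|^{-1+\beta/2}\, e^{-\omega \Ndof^\mu/2}.
\end{align*}

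The Mittag-Leffler prefactor is controlled via \eqref{eq:mittag_leffler_est}: since $|\arg z(y)|\leq \pi/4$ on the contour, $|\arg(-z^\beta)|\geq \pi - \beta \pi/4 \geq 3\pi/4 > \gamma \pi/2$, which provides a valid $\zeta$; for small $|z|$ one uses boundedness of the entire function $e_{\gamma,1}$ on compact sets. Combining the three bounds and using $|z'(y_n)|=|z(y_n)|$, each summand is controlled by
\begin{align*}
\frac{|z(y_n)|^{\beta/2}}{1 + t^\gamma |z(y_n)|^\beta}\, e^{-\omega \Ndof^\mu/2}
= t^{-\gamma/2}\, \frac{(t^\gamma |z(y_n)|^\beta)^{1/2}}{1 + t^\gamma |z(y_n)|^\beta}\, e^{-\omega \Ndof^\mu/2}.
\end{align*}
The elementary inequality $x^{1/2}/(1+x)\leq 1/2$ for $x\geq 0$ then produces the sharp $t^{-\gamma/2}$ factor (rather than $t^{-\gamma}$). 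Summing $2\Nquad+1$ such contributions with quadrature weight $\kquad$ produces the factor $\Nquad \kquad$ in the final bound. The main obstacle is bookkeeping the uniformity of Assumption~\ref{ass:approx_of_HHx} across the full range of quadrature nodes: the exponential approximation constant must be independent of $n$, which forces the finite element space to resolve the smallest scale induced by the outermost $|z(y_n)|$, and it is precisely this requirement that determines the hypothesis $\varepsilon = b^{-1/2} e^{-\Nquad \kquad/2}$ via the geometry of the hyperbolic contour.
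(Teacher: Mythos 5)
Your proposal is correct and follows essentially the same route as the paper: the same term-by-term decomposition of the quadrature sum, the same reduction of the resolvent error to Assumption~\ref{ass:approx_of_HHx} via $R(z)u_0=z^{-1}u$ followed by interpolation to $\widetilde{H}^\beta(\Omega)$, and the same elementary splitting of $|z|^{\beta/2}/(1+t^\gamma|z|^\beta)$ to extract the sharp $t^{-\gamma/2}$ factor. If anything you are slightly more explicit than the paper in spelling out the C\'ea-type step that converts the best-approximation bound of the assumption into a bound on the Galerkin error $R(z)u_0-R_h(z)u_0$.
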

\begin{proof}
    We note the following, easy to verify estimates, with the generic constants
    only depending on $b$~(as used in the definition of the contour $\CC$): 
    \begin{align}
      \abs{z(y_j)}\leq  b e^{ \abs{j}k}\quad \text{and}  \quad
    \abs{z(y_j)} &\sim \abs{z'(y_j)} \sim e^{ \abs{j}k}. \label{eq:zprime_vs_z}
    \end{align}  
  For fixed $z \in \sector$ with $\abs{z}^{1/2} \leq  \sqrt{b}\, e^{\Nquad \kquad/2}$, Assumption~\ref{ass:approx_of_HHx}
  gives using~\eqref{eq:interpolation_est}:
  \begin{align*}
    \norm{R(z)u_0 - R_h(z)u_0}_{\widetilde{H}^{\beta}(\Omega)}\lesssim \abs{z}^{\beta/2-1}e^{- \omega \Ndof^\mu}.
  \end{align*}

  Thus, using~\eqref{eq:mittag_leffler_est}, as long as $\HHx$ resolves all the scales $|z(y_j)|^{-1/2}$, the error can be estimated by
  \begin{align*}
    \norm{\uq(t) - \uqh(t)}_{\widetilde{H}^\beta(\Omega)}
    &\lesssim \kquad \sum_{n=-\Nquad}^{\Nquad}{\big|{e_{\gamma,1}(-t^\gamma z(y_n)^\beta)}\big|\abs{z'(y_n)} \abs{z(y_n)}^{-1+\beta/2} e^{-\omega \Ndof^\mu}}\\
    &\lesssim \kquad  \sum_{n=-\Nquad}^{\Nquad}{ \frac{1}{1+ t^\gamma \abs{z(y_n)}^\beta} \abs{z(y_n)}^{\beta/2} e^{-\omega \Ndof^\mu}}.    
  \end{align*}
  The following simple calculation then concludes the proof:
  \begin{align}
    \label{eq:bounding_the_integrand}
    \begin{split}
    \frac{1}{1+ t^\gamma \abs{z(y_n)}^\beta} \abs{z(y_n)}^{\beta/2}
    &=\Bigg(\frac{1}{1+ t^\gamma \abs{z(y_n)}^\beta} \Bigg)^{1/2}
      \Bigg(\frac{1}{1+ t^\gamma \abs{z(y_n)}^\beta} \abs{z(y_n)}^{\beta}\Bigg)^{1/2} \\
    &\lesssim \Bigg(\frac{1}{\abs{z(y_j)}^{-\beta}+ t^\gamma }\Bigg)^{1/2} 
    \lesssim t^{-\gamma/2}.      
  \end{split}
  \end{align}  
\end{proof}

Next, we consider the discretization error due to replacing the contour integral by the sinc-quadrature formula. This can be done along
the same lines as in \cite{bonito_pasciak_parabolic}. We take their definition for the class of functions which can be approximated well by
the sinc quadrature.
\begin{definition}
  \label{def:sinc_functions}
  Given $H>0$, we define $S(B_H)$ as the set of functions $f$, defined on $\R$, satisfying the following assumptions:
  \begin{enumerate}[(i)]
  \item
    \label{def:sinc_functions:i}
    $f$ can be extended to an analytic function on the infinite strip
    \begin{align}
      B_H:=\{ z \in \C: \abs{\Im(z)} < H\}
    \end{align}
    that is also continuous on $\overline{B_H}$.
  \item
    \label{def:sinc_functions:ii}
    There exists a constant $C>0$ independent of $y \in \R$ such that
    \begin{align}
      \int_{-H}^{H}{\abs{f(y+\ii w)} \,dw}&\leq C \qquad \forall y \in \R.
    \end{align}
  \item
    \label{def:sinc_functions:iii}
    We have
    \begin{align}
      N(B_H):=\int_{-\infty}^{\infty}{\abs{f(y+\ii H)} + \abs{f(y-\ii H)}\,dy} < \infty.
    \end{align}   
  \end{enumerate}
\end{definition}

The following error estimate is proved in~\cite{sinc_book}, and is also the basis for the convergence result~\cite[Lem.~{4.2}]{bonito_pasciak_parabolic}.
\begin{proposition}[{\cite[Thm.~{2.20}]{sinc_book}}]
  \label{prop:sinc_quad}
  If $f \in S(B_H)$ and $\kquad >0$, then
  \begin{align}
    \abs{\int_{-\infty}^{\infty}{f(x)\,dx} - \kquad \sum_{n=-\infty}^{\infty}{f(\kquad \,n)} } &= \frac{ N(B_H)}{2 \sinh(\pi d /\kquad)} e^{-\pi d/\kquad}.
  \end{align}  
\end{proposition}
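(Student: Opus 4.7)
The plan is to reduce the statement to the Poisson summation formula together with a Fourier-analytic contour-shift bound; this is the standard strategy behind Stenger's theorem, which is invoked here.

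First, I would establish the Poisson summation identity
\[
  k \sum_{n\in\Z} f(kn) = \sum_{m\in\Z} \widehat f\bigl(2\pi m/k\bigr), \qquad \widehat f(\xi) := \int_{\R} f(x)\, e^{-i\xi x}\,dx.
\]
The $m=0$ term equals $\int_\R f(x)\,dx$, so the quadrature error equals the tail $\sum_{m\neq 0} \widehat f(2\pi m/k)$. Conditions (ii) and (iii) of Definition~\ref{def:sinc_functions} provide the $L^1$-decay and absolute summability needed to justify the pointwise Poisson formula.

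Second, for each $m\neq 0$, I would use Cauchy's theorem to shift the contour in $\widehat f$ from $\R$ to $\R\mp i H$, choosing the sign opposite to that of $m$. For $m>0$ this yields
\[
  \widehat f(2\pi m/k) = e^{-2\pi m H/k}\int_{\R} f(x - iH)\, e^{-2\pi i m x/k}\,dx,
\]
and symmetrically for $m<0$. The vertical portions of the rectangular contour vanish along a suitable subsequence of widths $R_n\to\infty$: by Fubini and condition (ii), the integral of $|f(\pm R + iw)|$ over $w\in(-H,H)$ is finite for a.e.\ $R$, and Chebyshev's inequality extracts a sequence on which these integrals tend to zero.

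Third, taking absolute values and invoking condition (iii) gives $|\widehat f(2\pi m/k)| \leq e^{-2\pi |m| H/k}\, N^{\pm}(B_H)$, where $N^{\pm}$ denote the two boundary integrals whose sum equals $N(B_H)$. Summing the resulting geometric series over $m\neq 0$ and using the identity
\[
  \sum_{m\geq 1} e^{-2\pi m H/k} \;=\; \frac{1}{e^{2\pi H/k}-1} \;=\; \frac{e^{-\pi H/k}}{2\sinh(\pi H/k)}
\]
recovers the stated expression (with the strip width $d=H$). The main obstacle is that the hypotheses only impose $L^1$-control along horizontal lines, not uniform decay as $\operatorname{Re} z\to\pm\infty$; the standard remedy is to perform the whole argument on finite rectangles of width $R_n$ and then pass to the limit along a subsequence on which the vertical sides contribute negligibly, after which both Poisson summation and the contour shift are rigorously justified.
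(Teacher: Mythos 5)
The paper does not actually prove this proposition: it is quoted from \cite[Thm.~2.20]{sinc_book}, so there is no in-paper argument to compare against. Your Poisson-summation route is the standard alternative to Stenger's own proof, which instead represents the quadrature error as a contour integral of $f(z)e^{\pm i\pi z/\kquad}/\sin(\pi z/\kquad)$ along the two boundary lines of the strip (residue calculus with a cosecant kernel). Both arguments yield exactly the same constant: your geometric series $\sum_{m\ge 1}e^{-2\pi mH/\kquad}=e^{-\pi H/\kquad}/\bigl(2\sinh(\pi H/\kquad)\bigr)$ plays the role of the bounds $|\sin(\pi z/\kquad)|\ge\sinh(\pi H/\kquad)$ and $|e^{\pm i\pi z/\kquad}|=e^{-\pi H/\kquad}$ on $\Im z=\mp H$ in Stenger's version. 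Note also that the statement as printed should read ``$\le$'' rather than ``$=$'', and the $d$ in the exponent is the strip half-width $H$; you have implicitly, and correctly, read it that way.

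The one step that does not hold as written is the disposal of the vertical sides of the rectangles. Condition~(\ref{def:sinc_functions:ii}) of Definition~\ref{def:sinc_functions} only asserts that $R\mapsto\int_{-H}^{H}\abs{f(R+iw)}\,dw$ is \emph{uniformly bounded}; Chebyshev's inequality produces a null subsequence only if this function is integrable in $R$ (equivalently, $f\in L^1$ of the whole strip), which boundedness does not imply. In Stenger's actual definition of the class $B(D_d)$ these cross-section integrals are required to \emph{tend to zero} as $\Re z\to\pm\infty$, and that is precisely the hypothesis that kills the vertical sides; the weaker condition reproduced in the paper does not suffice for your subsequence argument (nor, strictly speaking, for Stenger's contour argument either). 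A related gap is the justification of pointwise Poisson summation: the absolute summability of $\sum_n\abs{f(\kquad n)}$ and the membership $f\in L^1(\R)$ have to be extracted from analyticity together with conditions~(\ref{def:sinc_functions:ii})--(\ref{def:sinc_functions:iii}), for instance via the sub-mean-value inequality on disks contained in the strip, rather than being attributed to those conditions outright. With the vertical-side condition strengthened to Stenger's form, your proof closes and gives the stated bound.
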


The behavior of the exact solution, most notably the blowup of the energy norm for small times is determined by the
regularity and boundary conditions of the initial condition $u_0$, formalized using the spaces $\widetilde{H}^{\theta}(\Omega)$. Even if the
boundary conditions are not met, $u_0$ is slightly better than just $L^2(\Omega)$. This is the content of the following proposition.
\begin{proposition}
  \label{prop:no_bc_for_small_sobolev}
  For $\theta \in [0,1/2)$, and $u_0 \in H^\theta(\Omega)$, we can bound
  \begin{align*}
    \norm{u_0}_{\widetilde{H}^{\theta}(\Omega)} &\leq C \norm{u_0}_{H^{\theta}(\Omega)}.
  \end{align*}
\end{proposition}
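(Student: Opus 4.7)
The proposition is the classical ``boundary conditions disappear below $\theta=1/2$'' fact: the zero trace built into $H^1_0$ becomes invisible after real interpolation with $L^2$ as soon as $\theta<1/2$, because the trace operator itself is not defined on $H^\theta(\Omega)$ below this threshold. The embedding $\widetilde H^\theta(\Omega) \hookrightarrow H^\theta(\Omega)$ is trivial from the monotonicity of the $K$-method applied to $H^1_0(\Omega) \hookrightarrow H^1(\Omega)$, so the content of the proposition is the reverse continuity.

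My plan is to argue directly on the $K$-functional. Fix $u_0 \in H^\theta(\Omega)$ and $t>0$, and choose a near-optimal splitting $u_0 = (u_0-v_t)+v_t$ with $v_t \in H^1(\Omega)$ realizing $K(t,u_0;L^2,H^1)$ up to a constant factor. From $v_t$ I construct $w_t := \eta_\delta v_t \in H^1_0(\Omega)$, where $\eta_\delta$ is a smooth cutoff vanishing in a $\delta/2$-tube around $\partial\Omega$, equal to $1$ outside a $\delta$-tube, and satisfying $|\nabla \eta_\delta|\lesssim \delta^{-1}$; I will calibrate $\delta$ to $t$. Writing $\Omega_\delta := \{x\in\Omega : \operatorname{dist}(x,\partial\Omega)<\delta\}$ and splitting $\|u_0-w_t\|_{L^2} + t\|w_t\|_{H^1}$ via the triangle and product rule, the new contributions are $\|(1-\eta_\delta)v_t\|_{L^2}$ and $t\delta^{-1}\|v_t\|_{L^2(\Omega_\delta)}$, while the remaining terms are already controlled by $K(t,u_0;L^2,H^1)$.

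The main obstacle is precisely the boundary-strip estimate. I would invoke a Hardy-type inequality on the tube: for $\theta<1/2$ the weight $\operatorname{dist}(\cdot,\partial\Omega)^{-\theta}$ is admissible on $H^\theta(\Omega)$, hence $\|v\|_{L^2(\Omega_\delta)} \lesssim \delta^{\theta} \|v\|_{H^\theta(\Omega)}$ by a scaling/localization argument. This is exactly the step that fails at $\theta=1/2$, where $\widetilde H^{1/2}(\Omega)$ genuinely differs from $H^{1/2}(\Omega)$ (giving rise to the Lions--Magenes space $H^{1/2}_{00}$). Optimizing $\delta$ in $t$ then matches the boundary contributions to the correct power of the $K$-functional, yielding $w_t \in H^1_0(\Omega)$ as a competitor in $K(t,u_0;L^2,H^1_0)$ with the desired bound.

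Rather than grinding through the scaling argument, the cleanest finish is to appeal to the classical identification $[L^2(\Omega),H^1_0(\Omega)]_{\theta,2} = [L^2(\Omega),H^1(\Omega)]_{\theta,2}$ for $\theta \in [0,1/2)$ on Lipschitz domains, recorded for instance in Lions--Magenes Vol.~1, Thm.~11.1 (or Triebel, \emph{Interpolation Theory, Function Spaces, Differential Operators}, Sect.~4.3.2), from which the proposition is immediate. The only step specific to this paper is to check that the real-interpolation norm $\|\cdot\|_{[X_0,X_1]_{\theta,2}}$ defined in the excerpt agrees, up to equivalence, with the one used in those references, which is standard.
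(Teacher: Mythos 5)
Your proof is correct and, in the end, takes the same route as the paper: the paper's proof is precisely the appeal to the classical coincidence $H^\theta(\Omega)=\widetilde H^\theta(\Omega)$ with equivalent norms for $\theta\in[0,1/2)$, citing Triebel and McLean where you cite Lions--Magenes/Triebel. Your preliminary $K$-functional sketch is a reasonable outline of how that classical fact is proved (though as written the calibration $\delta\sim t$ does not obviously close the $\int_0^\infty(\cdot)^2\,dt/t$ integral and would need more care), but since you ultimately rest the argument on the standard identification, the proof matches the paper's.
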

\begin{proof}
  For $\theta<1/2$, the two families of spaces $H^{\theta}$ and $\widetilde{H}^{\theta}$ coincide with equivalent norms,
  see~\cite[Sect.~{1.11.6}]{triebel3} or~\cite[Thms.~{3.33}, {B.9}, {3.40}]{McLean2000}).
\end{proof}

\begin{lemma}
  \label{lemma:hom:g_lambda_estimates}
  For $\lambda \geq \lambda_1 >0$, we define the function
  \begin{align}
    g_{\lambda}(y,t):=\frac{1}{2\pi \ii } e_{\gamma,1}(-t^\gamma z^\beta) z^\prime(y)(z(y)-\lambda)^{-1}.
  \end{align}
  Then,  for $H \in (0,\pi/4)$ and $\varepsilon \in (0, \beta/2)$, we have the estimate 
  \begin{align}
    \abs{g_{\lambda}(y,t)}\lesssim t^{-\gamma/2} \lambda^{-\beta/2+\varepsilon} e^{- \varepsilon \Re(y)} \qquad \qquad \forall y \in B_H,
    \label{eq:g_lambda_general_est}
  \end{align}
  with implied constant depending on $\lambda_1$, $\beta$ , $\gamma$, and $H$.

  Additionally, it holds that
  $g_{\lambda} \in S(B_{H})$ with
  $$
  N(B_H) \leq C(\beta,H,b,\varepsilon,\gamma) t^{-\gamma/2} \lambda^{-\beta/2 + \varepsilon} .
  $$
\end{lemma}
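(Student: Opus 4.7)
The plan is to factorize $|g_\lambda(y,t)| = (2\pi)^{-1}|e_{\gamma,1}(-t^\gamma z(y)^\beta)| \cdot |z'(y)| \cdot |z(y)-\lambda|^{-1}$ and estimate each factor separately on $B_H$. Setting $y = y_1 + iw$ with $|w| \leq H < \pi/4$, a direct computation from \eqref{eq:def_of_z} yields $\Re z(y) = b\cosh(y_1)(\cos w - \sin w) > 0$ and $\Im z(y) = b\sinh(y_1)(\sin w + \cos w)$, so that $z(y)$ stays in a sector strictly inside the right half-plane with $|\arg z(y)| < \pi/2$ uniformly on $B_H$. Consequently $-t^\gamma z(y)^\beta$ falls in the region where \eqref{eq:mittag_leffler_est} applies, giving $|e_{\gamma,1}(-t^\gamma z(y)^\beta)| \lesssim (1+t^\gamma|z(y)|^\beta)^{-1}$; the same computation extends \eqref{eq:zprime_vs_z} to the strip, yielding $|z(y)| \sim |z'(y)| \sim e^{|y_1|}$.

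Next I would establish the lower bound $|z(y) - \lambda| \gtrsim \max(|z(y)|, \lambda)$ for $\lambda \geq \lambda_1$. The regimes $|z(y)| \geq 2\lambda$ and $|z(y)| \leq \lambda/2$ are handled by the triangle inequality. In the intermediate regime $|z(y)|\sim\lambda$, the formulas for $\Re z(y)$ and $\Im z(y)$ show that $\Re z(y) \sim \lambda$ forces $\cosh(y_1) \sim \lambda/b$; then $|\sinh(y_1)| \sim \lambda/b$ as well, so $|\Im z(y)| \gtrsim \lambda$. This uses that $b$ is chosen smaller than a fixed multiple of $\lambda_1$, consistent with the constraint that $\CC \subset \sector$. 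Combined with $|z'(y)| \sim |z(y)|$, this yields the key ratio estimate $|z'(y)|/|z(y)-\lambda| \lesssim \min(1,|z(y)|/\lambda)$.

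Combining the two factor estimates gives $|g_\lambda(y,t)| \lesssim \min(1,|z(y)|/\lambda)\,(1+t^\gamma|z(y)|^\beta)^{-1}$, to which I apply the splitting trick of \eqref{eq:bounding_the_integrand} in two regimes. When $|z(y)| \geq \lambda$, the trick yields $|g_\lambda(y,t)| \lesssim t^{-\gamma/2}|z(y)|^{-\beta/2}$; factoring $|z|^{-\beta/2} = |z|^{-\beta/2+\varepsilon}|z|^{-\varepsilon}$ and using $-\beta/2+\varepsilon < 0$ together with $|z(y)| \gtrsim e^{|y_1|}$ produces the desired $t^{-\gamma/2}\lambda^{-\beta/2+\varepsilon}e^{-\varepsilon|y_1|}$. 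When $|z(y)| \leq \lambda$, the trick applied to $|z|/(\lambda(1+t^\gamma|z|^\beta))$ gives $t^{-\gamma/2}|z|^{1-\beta/2}/\lambda \leq t^{-\gamma/2}\lambda^{-\beta/2}$; to insert the $\varepsilon$ factor, I note that $|z(y)| \leq \lambda$ together with $|z(y)| \gtrsim e^{|y_1|}$ forces $|y_1| \lesssim \log\lambda$ up to an additive constant, hence $\lambda^{-\varepsilon} \lesssim e^{-\varepsilon|y_1|}$. Both cases combine to the symmetric bound $|g_\lambda(y,t)| \lesssim t^{-\gamma/2}\lambda^{-\beta/2+\varepsilon}e^{-\varepsilon|y_1|}$, which in particular implies \eqref{eq:g_lambda_general_est} since $e^{-\varepsilon|y_1|} \leq e^{-\varepsilon y_1}$.

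Finally, for $g_\lambda \in S(B_H)$: analyticity on $\overline{B_H}$ follows from analyticity of $z$, $z'$ and the Mittag-Leffler function, together with the non-vanishing of $z(y) - \lambda$ established in step two. Conditions (ii) and (iii) of Definition~\ref{def:sinc_functions} are immediate consequences of the symmetric bound: $\int_{-H}^{H}|g_\lambda(y_1+iw,t)|\,dw \lesssim t^{-\gamma/2}\lambda^{-\beta/2+\varepsilon}e^{-\varepsilon|y_1|}$ is uniformly bounded in $y_1 \in \R$, and $N(B_H) \lesssim t^{-\gamma/2}\lambda^{-\beta/2+\varepsilon}\int_{\R} e^{-\varepsilon|y_1|}\,dy_1 \lesssim t^{-\gamma/2}\lambda^{-\beta/2+\varepsilon}/\varepsilon$. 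The main technical obstacle is the intermediate regime $|z(y)|\sim\lambda$ in step two, which requires quantitative control of how closely the contour can approach an eigenvalue and relies crucially on $b$ being chosen small relative to the bottom of the spectrum.
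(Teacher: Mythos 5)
Your argument is correct and follows essentially the same route as the paper: bound the three factors separately, use the Mittag--Leffler decay \eqref{eq:mittag_leffler_est} together with $\abs{z(y)}\sim\abs{z'(y)}\sim e^{\abs{\Re y}}$, and extract $t^{-\gamma/2}$ by the same square-root splitting as in \eqref{eq:bounding_the_integrand}; your two-regime case split on $\min(1,\abs{z}/\lambda)$ is just a repackaging of the paper's interpolation between $\abs{z'(z-\lambda)^{-1}}\lesssim 1$ and $\lambda\abs{z'(z-\lambda)^{-1}}\lesssim e^{\abs{\Re y}}$. The only substantive difference is that you prove the separation bound $\abs{z(y)-\lambda}\gtrsim\max(\abs{z(y)},\lambda)$ directly by a geometric argument on the strip (correctly flagging the needed relation between $b$ and the bottom of the spectrum), whereas the paper imports the corresponding estimate \eqref{eq:resolvent_estimates_3} from \cite{bonito_pasciak_parabolic}.
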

\begin{proof}
  We begin by noting the  following estimates for $y \in B_H$,
  with implied constants depending on $\lambda_1, b,$ and $H$:
  \begin{subequations}
    \begin{align}
      \label{eq:resolvent_estimates_1}
      \abs{z(y)} + \abs{z^\prime(y)}&\lesssim e^{\abs{\Re(y)}}, \\   
      \label{eq:resolvent_estimates_2}
      \Re(z(y)^\beta)&\gtrsim e^{\beta\abs{\Re(y)}}, \\  
      \label{eq:resolvent_estimates_3}
      |z^\prime(y)(z(y) - \lambda)^{-1}| &\lesssim 1.
    \end{align}
  \end{subequations}
  The first estimate follows from the definition, the others can be found in \cite[Lem.~{B.1}]{bonito_pasciak_parabolic}
  and were proven in the course of the proof~\cite[Thm.{4.1}]{blp17}.

  We assert the following estimate for $\varepsilon \in [0,\beta/2]$:
  \begin{align*}
    \lambda^{\beta} \abs{z^\prime(y) \left(z(y) - \lambda\right)^{-1}} &\lesssim \lambda^{2\varepsilon} e^{(\beta-2\varepsilon) \abs{\Re(y)}}.
  \end{align*}
  To see this, we compute using estimates \eqref{eq:resolvent_estimates_1} and \eqref{eq:resolvent_estimates_3}: 
  \begin{align*}
    \abs{\lambda \,z^\prime(y) \left(z(y) - \lambda\right)^{-1}}
    &=\abs{z^\prime(y) \left( -1 + \frac{z(y)}{z(y)-\lambda}\right)}
    \lesssim \abs{z^\prime(y)} + \abs{z(y)} \lesssim e^{\abs{\Re(y)}}.
  \end{align*}
  Interpolation with~\eqref{eq:resolvent_estimates_3} then gives the general estimate:
  \begin{align*}
    \lambda^{\beta} \abs{z^\prime(y) \left(z(y) - \lambda\right)^{-1}}
    &= \lambda^{2\varepsilon} \left(\abs{z^\prime(y) \left(z(y) - \lambda\right)^{-1}} \right)^{1-\beta+2\varepsilon}  \left( \lambda \abs{z^\prime(y) \left(z(y) - \lambda\right)^{-1}} \right)^{\beta-2\varepsilon} \\
    &\lesssim \lambda^{2\varepsilon} e^{(\beta-2\varepsilon) \abs{\Re(y)}}.
  \end{align*}

  To show~\eqref{eq:g_lambda_general_est}, we use~\eqref{eq:mittag_leffler_est} to get
  \begin{align*}
    \lambda^{\beta/2}\abs{g_{\lambda}(y,t)}
    &\lesssim
      \Big(\abs{g_{\lambda}(y,t)}\Big)^{1/2}\Big(\lambda^{\beta}\abs{g_{\lambda}(y,t)}\Big)^{1/2} \\
    &\lesssim
      \Big(\frac{1}{1+t^\gamma e^{\beta \abs{\Re(y)}}} \Big)^{1/2}
      \Big(\frac{1}{1+t^\gamma e^{\beta \abs{\Re(y)}}} \lambda^{2\varepsilon} e^{(\beta - 2\varepsilon) \abs{\Re(y)}} \Big)^{1/2}\\
      &\lesssim t^{-\gamma/2}  \lambda^{\varepsilon} e^{-\varepsilon \abs{\Re(y)}} .
  \end{align*}

  From estimate~\eqref{eq:g_lambda_general_est}, we easily deduce parts~(\ref{def:sinc_functions:ii}) and (\ref{def:sinc_functions:iii})
  of Definition~\ref{def:sinc_functions}. Part~(\ref{def:sinc_functions:i}) was already shown in~\cite{bonito_pasciak_parabolic}. 
\end{proof}

\begin{lemma}
  \label{lemma:hom:est_sinc_discr}
  For $\Nquad \in \N$ and $\kquad \sim \Nquad^{-1/2}$, the following estimate holds for $0\leq \varepsilon < \min\big(\frac{\beta}{2},\frac{1}{4}\big)$:
  \begin{align*}
    \norm{\uu(t) - \uq(t)}_{\widetilde{H}^{\beta}(\Omega)}
    &\lesssim  t^{-\gamma/2} e^{-\omega \Nquad^{1/2} } \norm{u_0}_{H^{2\varepsilon}(\Omega)}
  \end{align*}
  for some constant $\omega>0$, depending on $\varepsilon$, $\gamma$, $\beta$.

\end{lemma}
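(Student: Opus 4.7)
The plan is to diagonalize the error $\uu(t)-\uq(t)$ in the $L^2(\Omega)$-orthonormal eigenbasis $(\varphi_j)_{j\geq 0}$ of $\LL$, thereby reducing the vector-valued quadrature error to a family of scalar sinc-quadrature errors indexed by the eigenvalues $\lambda_j$. Since $(z-\LL)^{-1}\varphi_j=(z-\lambda_j)^{-1}\varphi_j$, writing $u_0=\sum_j(u_0,\varphi_j)_{L^2(\Omega)}\varphi_j$ gives
\begin{align*}
  (\uu(t),\varphi_j)_{L^2(\Omega)} &= (u_0,\varphi_j)_{L^2(\Omega)}\int_{-\infty}^{\infty} g_{\lambda_j}(y,t)\,dy, \\
  (\uq(t),\varphi_j)_{L^2(\Omega)} &= (u_0,\varphi_j)_{L^2(\Omega)}\,\kquad \sum_{n=-\Nquad}^{\Nquad} g_{\lambda_j}(y_n,t),
\end{align*}
with $g_{\lambda_j}$ as in Lemma~\ref{lemma:hom:g_lambda_estimates}. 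Using the norm equivalence $\norm{v}_{\widetilde{H}^{\beta}(\Omega)}^2 \sim \sum_j \lambda_j^{\beta}|(v,\varphi_j)_{L^2(\Omega)}|^2$, it then suffices to estimate the scalar quadrature error for $g_{\lambda_j}$ uniformly in $j$ and sum the result against $\lambda_j^{\beta}|(u_0,\varphi_j)_{L^2(\Omega)}|^2$.

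Next, I would split this scalar error into a pure sinc-discretization error (integral vs.\ the full doubly-infinite trapezoidal sum) and a symmetric truncation tail (full sum vs.\ its truncation at $\pm\Nquad$). For the first, fixing some $H \in (0,\pi/4)$ and applying Proposition~\ref{prop:sinc_quad} with the bound on $N(B_H)$ from Lemma~\ref{lemma:hom:g_lambda_estimates} gives a contribution of order $t^{-\gamma/2}\lambda_j^{-\beta/2+\varepsilon}\,e^{-\pi H/\kquad}$. For the second, the pointwise estimate~\eqref{eq:g_lambda_general_est} combined with $y_n=n\kquad$ yields, using a geometric series,
\begin{align*}
  \kquad\sum_{|n|>\Nquad}\abs{g_{\lambda_j}(y_n,t)}
  &\lesssim t^{-\gamma/2}\lambda_j^{-\beta/2+\varepsilon}\,\kquad\sum_{n>\Nquad} e^{-\varepsilon n\kquad}
  \lesssim t^{-\gamma/2}\lambda_j^{-\beta/2+\varepsilon}\,e^{-\varepsilon \Nquad \kquad}.
\end{align*}
With the choice $\kquad\sim \Nquad^{-1/2}$, one has $e^{-\pi H/\kquad}\lesssim e^{-\omega_1 \Nquad^{1/2}}$ and $e^{-\varepsilon \Nquad \kquad}\lesssim e^{-\omega_2 \Nquad^{1/2}}$, so both contributions collapse into $t^{-\gamma/2}\lambda_j^{-\beta/2+\varepsilon}\,e^{-\omega \Nquad^{1/2}}$ for some $\omega>0$ depending on $H$ and $\varepsilon$.

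Squaring the resulting modewise bound, summing over $j$, and using the norm equivalence, the powers of $\lambda_j$ combine as $\lambda_j^{\beta}\cdot \lambda_j^{-\beta+2\varepsilon}=\lambda_j^{2\varepsilon}$, so
\begin{align*}
  \norm{\uu(t)-\uq(t)}_{\widetilde{H}^{\beta}(\Omega)}^2
  &\lesssim t^{-\gamma}\,e^{-2\omega \Nquad^{1/2}} \sum_{j=0}^{\infty} \lambda_j^{2\varepsilon}\abs{(u_0,\varphi_j)_{L^2(\Omega)}}^2
  \sim t^{-\gamma}\,e^{-2\omega \Nquad^{1/2}}\norm{u_0}_{\widetilde{H}^{2\varepsilon}(\Omega)}^2.
\end{align*}
Since $2\varepsilon<1/2$, Proposition~\ref{prop:no_bc_for_small_sobolev} lets us replace $\widetilde{H}^{2\varepsilon}$ by $H^{2\varepsilon}$, giving the claimed estimate.

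The main technical obstacle is extracting, at the level of the scalar integrand $g_{\lambda_j}$, a $\lambda_j$-power which is strictly better than the naive $\lambda_j^{-\beta/2}$ so that the sum over $j$ converges to a norm of $u_0$ rather than blowing up. This is exactly what the interpolation trick in Lemma~\ref{lemma:hom:g_lambda_estimates} delivers through the small perturbation $\varepsilon$; the cost of this perturbation is paid in the additional exponential factor $e^{-\varepsilon|\Re(y)|}$ which also precisely compensates for the symmetric truncation tail. Once this refined pointwise bound is accepted, the remaining steps are essentially a bookkeeping exercise combining Propositions~\ref{prop:sinc_quad} and~\ref{prop:no_bc_for_small_sobolev} with the spectral decomposition.
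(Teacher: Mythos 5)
Your proposal is correct and follows essentially the same route as the paper's proof: spectral diagonalization reducing to scalar sinc-quadrature errors for $g_{\lambda_j}$, a split into the doubly-infinite sinc error (handled via Proposition~\ref{prop:sinc_quad} and the $N(B_H)$ bound of Lemma~\ref{lemma:hom:g_lambda_estimates}) plus the truncation tail (handled via the pointwise decay $e^{-\varepsilon|\Re(y)|}$ and a geometric series), followed by recombining the $\lambda_j$-powers and invoking Proposition~\ref{prop:no_bc_for_small_sobolev}. The argument is complete and matches the paper step for step.
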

\begin{proof}
  Using the error function
  \begin{align*}
    \mathcal{E}(\lambda,t):=\int_{-\infty}^{\infty}{g_{\lambda}(y,t) \, dy} - \kquad  \sum_{n=-\infty}^{\infty}{g_{\lambda}(n \,\kquad,t)}
  \end{align*}
  and the spectral decomposition $u_0=\sum_{j=0}^{\infty}{u_{0,j} \varphi_j}$, we can write the quadrature error as
  \begin{align*}
    \uu(t) - \uq(t) &=\sum_{j=0}^{\infty}{ \Big(\mathcal{E}(\lambda_j,t) +
                      \kquad \sum_{\abs{n} \geq \Nquad +1}{g_{\lambda_j}(n \, \kquad,t)} \Big) \,u_{0,j}\, \varphi_j}.
  \end{align*}
  For the error in the $\widetilde{H}^{\beta}(\Omega)$-norm, this means:
  \begin{align*}
    \norm{\uu(t) - \uq(t)}_{\widetilde{H}^{\beta}(\Omega)}^2
    &\lesssim \sum_{j=0}^{\infty}{  \lambda_j^\beta\abs{\mathcal{E}(\lambda_j,t)}^2 \abs{u_{0,j}}^2 +
      \Big( \kquad \sum_{\abs{n} \geq \Nquad +1}{ \lambda_j^{\beta/2} \abs{g_{\lambda_j}(n\,\kquad,t)}}\Big)^2 \abs{u_{0,j}}^2}.    
  \end{align*}
  The terms can be estimated by Proposition~\ref{prop:sinc_quad} and Lemma~\ref{lemma:hom:g_lambda_estimates}.
  \begin{align*}
    \norm{\uu(t) - \uq(t)}_{\widetilde{H}^{\beta}(\Omega)}^2
    &\lesssim
      t^{-\gamma} e^{-4 \pi d/k} \norm{u_0}_{\widetilde{H}^{2\varepsilon}(\Omega)}^2 +
      t^{-\gamma} \norm{u_0}_{\widetilde{H}^{2\varepsilon}(\Omega)}^2  \Big(\kquad \!\! \sum_{\abs{n} \geq \Nquad +1}{ e^{-\varepsilon n \kquad} \Big)^2} \\
    & \lesssim t^{-\gamma} \norm{u_0}_{\widetilde{H}^{2\varepsilon}(\Omega)}^2  \Big( e^{-4 \pi d/k} +  e^{- 2\varepsilon \Nquad \kquad} \Big).
  \end{align*}
  Picking $\kquad \sim \Nquad^{-1/2}$ and using Proposition~\ref{prop:no_bc_for_small_sobolev}  gives the stated result.
\end{proof}

\section{The inhomogeneous problem}
\label{sect:inhomogeneous_problem}
In this section, we focus on the inhomogeneous problem with homogeneous initial condition, i.e., \eqref{eq:model_problem} with $u_0=0$ and
general $f$.
The representation formula in this case reads
\begin{align}
  \uu_i(t)&=\int_{0}^{t}{\tau^{\gamma-1} e_{\gamma,\gamma}(-\tau^\gamma \LL^\beta) f(t-\tau) \,d\tau},
\end{align}
or, using the Riesz-Dunford calculus,
\begin{align}
  \uui(t)&=\frac{1}{2\pi \ii}\int_{0}^{t}{\tau^{\gamma-1} \left(\int_{\CC}{e_{\gamma,\gamma}(-\tau^\gamma z^\beta) R(z) f(t-\tau) \,dz} \right)\,d\tau}.
\end{align}
Our approximation scheme will be based on a sinc-type quadrature for the contour integral and an $hp$-type quadrature for the
convolution in time.

\subsection{$hp$-quadrature}
\label{sect:hp_quadrature}
In this section, we briefly summarize the theory and notation for using $hp$-methods to approximate integrals.
Given an interval $I=(a,b)$ and function $g: I \to \C$, we are interested in approximating
$$
\int_{I}{g(\tau) \,d\tau},
$$
where $g$ may have an algebraic singularity at the left endpoint $\tau=a$.

For a given degree $p \in \N_0$, we denote the Gauss quadrature points and weights on $(-1,1)$ by
$(x^p_j, w^p_j) \in (-1,1) \times \R_+$. See \cite[Sect.~{2.7}]{davis_rabinowitz} for details.
For $I=(-1,1)$, the  Gauss-quadrature approximation is then given by
\begin{align*}
  \QQ^p_{I} g:= \sum_{j=0}^{p}{w^p_j g(x^p_j)}.
\end{align*}
For general $I=(a,b)$, the approximation is obtained by an affine change of variables.

In order to get a method that adequately handles
a singularity at the left endpoint, we consider a mesh on the interval $(0,1)$ with a mesh grading factor $\sigma \in (0,1)$
and parameter $L \in \N$, $L\leq p$ given by
$$
K_0:=(0, \sigma^L),\, K_1:=(\sigma^{L}, \sigma^{L-1}),\, \dots,\, K_{L}:=(\sigma,1).
$$
On each one of these elements, we apply a Gauss quadrature, reducing the order as we approach the singularity, i.e.
\begin{align*}
  \int_{I}{g(\tau)\,d\tau}
  &\approx \QQ^{hp}_{I}
    g:= \sum_{\ell=0}^{L}{\QQ^{p-L+\ell}_{K_\ell} g}.
\end{align*}
For general intervals $(a,b)$ we again apply an affine change of variables.

The main result on such composite Gauss-quadrature is the following proposition, versions of which are well-known,  
see for example~\cite{schwab_quad,Schwab92,CvPS11}.
\begin{proposition}
  \label{prop:hp_quad_estimates}
  Fix $T>0$. Assume that $g: I:=(0,T) \to \C$ can be  extended holomorphically to a function $g: \mathcal{O} \to \C$ such that
  \begin{enumerate}[(i)]
  \item $\mathcal{O}$ contains a positive sector $\{ z \in \C: z\neq 0, \abs{\operatorname{Arg}{z}} < \delta \}$ for some constant $\delta > 0$; 
  \item
    there exist constants $\alpha \in [0,1)$ and $C_g>0$ such that
    \begin{align}
    \abs{g(z)} \leq C_g \abs{z}^{-\alpha}  \qquad \forall z \in \mathcal{O}.
    \label{eq:hp_quad_assumptions:bound}
    \end{align}
  \end{enumerate}

  Fix $\sigma > 0$ and consider the composite Gauss quadrature rule with $L$ layers of refinement with degree $p=L$.
  Then there exist positive constants $C_q$, $\omega$, and $C_{\text{stab}}$ such that
  \begin{align}
    \label{eq:hp_quad_est}
    \abs{ \int_{0}^{T}{g(\tau)\,d\tau} - \QQ_I^{hp} g}
    &\leq C_q T^{1-\alpha} e^{- \omega p} \qquad \text{and} \qquad
    \abs{\QQ_I^{hp} g} \leq C_{\text{stab}} \,\,T^{1-\alpha}. 
  \end{align}
\end{proposition}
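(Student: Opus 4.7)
The plan is to split the error according to the subdivision $I = \bigcup_{\ell=0}^L K_\ell$ and treat the singular element $K_0$ separately from the $K_\ell$ with $\ell \ge 1$, where the integrand is analytic in a neighborhood of fixed relative size.

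\textbf{Singular element.} On $K_0 = (0, T\sigma^L)$ I will not try to exploit analyticity. Using the bound $|g(\tau)| \le C_g \tau^{-\alpha}$ and the fact that Gauss nodes lie in the open interval, an elementary calculation gives
\begin{align*}
  \Big| \int_{K_0} g(\tau)\,d\tau \Big| \;\le\; C_g \int_0^{T\sigma^L} \tau^{-\alpha}\,d\tau \;\lesssim\; T^{1-\alpha}\,\sigma^{L(1-\alpha)} .
\end{align*}
A standard argument (e.g.\ exactness of Gauss quadrature on constants combined with positivity of the weights, or an affine change of variables reducing to Gauss quadrature of $(1+x)^{-\alpha}$) gives the analogous bound for $|\QQ_{K_0}^{p-L} g|$. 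Hence the contribution of $K_0$ to the quadrature error is $\lesssim T^{1-\alpha}\sigma^{L(1-\alpha)}$, which is exponentially small in $L=p$.

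\textbf{Smooth elements.} For $\ell \ge 1$ the element $K_\ell$ has left endpoint $\sim T\sigma^{L-\ell+1}$ and length $|K_\ell| \sim T\sigma^{L-\ell}$. Since $g$ is analytic in the sector of opening $\delta$ around the positive real axis, the largest disk around any point of $K_\ell$ contained in that sector has radius $\gtrsim |K_\ell|\sin\delta$. Mapping $K_\ell$ affinely to $(-1,1)$, the image of this disk contains a Bernstein ellipse $\mathcal{E}_\rho$ with parameter $\rho = \rho(\sigma,\delta) > 1$ independent of $\ell$. Moreover, on the preimage of $\mathcal{E}_\rho$ one has $|g| \lesssim (T\sigma^{L-\ell+1})^{-\alpha}$ by \eqref{eq:hp_quad_assumptions:bound}. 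Plugging this into the classical analytic Gauss quadrature error bound yields
\begin{align*}
  \Big|\int_{K_\ell} g \,-\, \QQ_{K_\ell}^{p-L+\ell} g\Big|
  \;\lesssim\; |K_\ell|\,\|g\|_{L^\infty(\mathcal{E}_\rho)}\, \rho^{-2(p-L+\ell)}
  \;\lesssim\; T^{1-\alpha}\,\sigma^{(1-\alpha)(L-\ell)}\,\rho^{-2(p-L+\ell)} .
\end{align*}

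\textbf{Summation and stability.} Choosing $p = L$, the sum over $\ell \ge 1$ becomes
\begin{align*}
  T^{1-\alpha}\sum_{\ell=1}^{L} \sigma^{(1-\alpha)(L-\ell)}\rho^{-2\ell}
  \;=\; T^{1-\alpha}\,\rho^{-2L}\sum_{m=0}^{L-1}\bigl(\sigma^{1-\alpha}\rho^{2}\bigr)^{m},
\end{align*}
which decays exponentially in $L$ at the rate $\omega = \min\bigl(2\log\rho,\, (1-\alpha)|\log\sigma|\bigr)$ (with a harmless $L$ factor absorbed into $\omega$ after decreasing it slightly), giving the claimed $C_q T^{1-\alpha} e^{-\omega p}$. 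The stability estimate then follows immediately by the triangle inequality: $|\QQ_I^{hp} g| \le |\int_0^T g| + |\QQ_I^{hp} g - \int_0^T g|$, where $|\int_0^T g| \le C_g \int_0^T \tau^{-\alpha}\,d\tau = \frac{C_g}{1-\alpha}T^{1-\alpha}$ and the quadrature error has already been bounded by $C_q T^{1-\alpha}$.

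\textbf{Main obstacle.} The delicate point is the uniformity (in $\ell$) of the Bernstein-ellipse parameter $\rho$. This requires a careful geometric argument that the sector of analyticity, transported to the reference interval $(-1,1)$, contains a fixed ellipse independently of how close $K_\ell$ sits to the singularity. The rest is bookkeeping of powers of $\sigma$ against the geometric factors $\rho^{-2(p-L+\ell)}$, trading off the two exponential rates by the choice $L = p$.
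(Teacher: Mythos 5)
Your argument is essentially the paper's own proof: the same decomposition into the singular element $K_0$ (bounded directly via $|g(\tau)|\lesssim\tau^{-\alpha}$ and positivity/exactness of the quadrature) and the remaining elements (Bernstein-ellipse estimate with a parameter $\rho>1$ uniform in $\ell$ thanks to the geometric grading matching the sector of analyticity), followed by balancing $\sigma^{(1-\alpha)L}$ against $\rho^{-2p}$ with $p=L$; you also correctly flag the uniformity of $\rho$ as the one point needing a genuine geometric argument. The only deviation is the stability bound, which you obtain as a corollary of the error estimate via the triangle inequality, whereas the paper proves it independently by comparing $\QQ^{hp}(\tau^{-\alpha})$ with $\int\tau^{-\alpha}\,d\tau$ using positivity of the Gauss weights --- both are valid, and the difference is immaterial here.
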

\begin{proof}
  To prove the estimate~\eqref{eq:hp_quad_est}, we assume $T=1$. The more general result follows by an affine transformation.
  We distinguish two cases. On the element $K_0$, we use the bound on $g$ and the fact that $K_0$ is small to get
  \begin{align*}
    \abs{\int_{0}^{\sigma^L}{g(\tau)\,d\tau} - \QQ_{K_0}^{0} g}
    &\lesssim C_g\int_{0}^{\sigma^L}{\tau^{-\alpha} \,d\tau} + \sigma^L |g(\sigma^L/2)|
      \lesssim C_g \sigma^{(1-\alpha)L}.      
  \end{align*}
  For any other element $K_{L-\ell}=(\sigma^{\ell+1},\sigma^\ell)$, we map to the reference interval $(-1,1)$:
  \begin{align}
    \abs{\int_{\sigma^{\ell+1}}^{\sigma^\ell}{g(\tau)\,d\tau } - \QQ^{p-\ell}_{K_{\ell}} g}
    &\sim \sigma^{\ell} \abs{\int_{-1}^{1}{\widetilde{g}(\widetilde{\tau})\, d\widetilde{\tau}} - \QQ^{p-\ell}_{(-1,1)}\widetilde{g}},
      \label{eq:hp_quad_proof1}
  \end{align}
  where $\widetilde{g}:=g\big(\frac{\sigma^\ell}{2}{(1-\sigma)\tau + (1+\sigma)}\big)$ is the pull-back of $g$ to $(-1,1)$.
  By geometric considerations, $\widetilde{g}$ is analytic on an ellipse $\mathcal{E}_{\rho}$ with semiaxis sum $\rho>1$, where $\rho$ depends on
  $\sigma$ and the opening angle of the sector. We apply standard results on Gauss-quadrature (see~\cite[Eqn.~(4.6.1.11)]{davis_rabinowitz})
  to estimate:
  \begin{align*}
   \Big|{\int_{-1}^{1}{\widetilde{g}(\widetilde{\tau})\, d\widetilde{\tau}} - \QQ^{p-\ell}_{(-1,1)}\widetilde{g}}\Big|
    &\lesssim \rho^{-2(p-\ell)} \sup_{z\in \mathcal{E}_{\rho}}{\abs{\widetilde{g}(z)}}
      \lesssim C_{g}\rho^{-2(p-\ell)} \sigma^{-\alpha \ell},
  \end{align*}
  where we used Assumption~(\ref{eq:hp_quad_assumptions:bound}) to bound $\widetilde{g}$.
  Going back to~\eqref{eq:hp_quad_proof1} we conclude
  \begin{align*}
    \Big|{\int_{\sigma^{\ell+1}}^{\sigma^\ell}{g(\tau)\,d\tau} - \QQ^{hp} g}\Big|&\lesssim C_g \max(\sigma^{1-\alpha},\rho^{-2})^{p}.
  \end{align*}
  For general intervals $(0,T)$ we note that  $C_g$ behaves like $C_g \sim T^{-\alpha}$ under affine transformations.

  We now show the stability estimate.
  On each subinterval $K_{\ell}$, for $\ell>0$ we have
  \begin{align*}
    \sigma t < T\sigma^{L-\ell+1} \leq t  \leq T\sigma^{L-\ell}.
  \end{align*}
   Since all the  weights of Gauss-methods are positive and the quadrature integrates constants exactly, we can calculate  
  \begin{align*}
    \abs{\QQ^{hp}_{I} g}
    &\lesssim \QQ^{hp} \big(\tau^{-\alpha}\big)
    \lesssim
      \Big(\frac{T \sigma^{L}}{2}\Big)^{-\alpha}+
      \sum_{\ell=0}^{L-1}{\QQ^{p-\ell} (T\sigma^{\ell+1})^{-\alpha}}
      = \Big(\frac{T \sigma^{L}}{2}\big)^{-\alpha}+
      \sum_{\ell=0}^{L-1}{\int_{T \sigma^{\ell+1}}^{T\sigma^\ell}}{ \big(T\sigma^{\ell+1}\big)^{-\alpha} \, d\tau} \\
    &\lesssim \Big(\frac{T \sigma^{L}}{2}\big)^{-\alpha}+
      \sum_{\ell=0}^{L-1}{\int_{T \sigma^{\ell+1}}^{T\sigma^\ell}}{ (\sigma \tau)^{-\alpha} \, d\tau}
      \lesssim \big(\sigma^{-\alpha L} + \sigma^{-\alpha} \big)T^{1-\alpha}. \qedhere
  \end{align*}
    
\end{proof}

\subsection{Analysis of the inhomogeneous problem}
We are now in the position to define and analyze the discretization scheme for the inhomogeneous problem.
The discretization is done on multiple levels. We define the operator-valued functions
\begin{align*}
  W(\tau)&:=\tau^{\gamma-1} e_{\gamma,\gamma}(-\tau^\gamma\gamma \LL^{\beta})
           =\tau^{\gamma-1}\frac{1}{2\pi\ii}{\int_{\CC}{e_{\gamma,\gamma}(-\tau^\gamma z^\beta)\, R(z)\,dz}}, \\
  W^{q}(\tau)&:=\tau^{\gamma-1} \frac{\kquad}{2\pi \ii}\sum_{n=-\Nquad}^{\Nquad}
               {e_{\gamma,\gamma}(-\tau^\gamma z^\beta(n\,\kquad)) z^\prime(n\,\kquad) R(z(n\,\kquad))},\\
  W^{q,h}(\tau)&:=\tau^{\gamma-1} \frac{\kquad}{2\pi \ii}\sum_{n=-\Nquad}^{\Nquad}
               {e_{\gamma,\gamma}(-\tau^\gamma z^\beta(n\,\kquad)) z^\prime(n\,\kquad) R_h(z(n\,\kquad))}.
\end{align*}
We get the first layer of approximation by replacing the convolution in time with an $hp$-quadrature:
\begin{align*}
  \uiq(t):=\QQ^{hp}_{(0,t)} \left[W(\cdot) f(t-\cdot)\right].
\end{align*}
Instead of relying on the exact evaluation of $\LL^{\beta}$ we then switch to the sinc-quadrature approximation of the
Riesz-Dunford integral by defining 
$$
\uiqq(t):=\QQ^{hp}_{(0,t)}  \left[W^q(\cdot) f(t-\cdot)\right].
$$
Finally, we do  the discretization in space to obtain the fully discrete function
$$
\uiqqh(t):=\QQ^{hp}_{(0,t)}  \left[W^{q,h}(\cdot) f(t-\cdot)\right].
$$

We now step by step estimate the discretization errors, starting with the one in space.
\begin{lemma}
  \label{lemma:inh:est_space_discr}
  Assume that $f: (0,T) \to L^2(\Omega)$ is uniformly analytic (cf. Definition~\ref{def:uniformly_analytic}), and 
  $\HHx$ resolves the scales down to $\varepsilon = b^{-1/2} e^{- \Nquad \kquad/2 }$. Then
  \begin{align*}
    \norm{\uiqq(t)-\uiqqh(t)}_{\widetilde{H}^{\beta}(\Omega)}
    &\lesssim
      t^{\gamma/2} \kquad\, \Nquad\, e^{-\omega \Ndof^\mu}.
  \end{align*}
\end{lemma}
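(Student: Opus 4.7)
The plan is to reduce the estimate to a pointwise-in-$\tau$ bound on the spatial discretization error of the Riesz--Dunford sum, and then to feed this scalar bound into the stability part of the $hp$-quadrature result (Proposition~\ref{prop:hp_quad_estimates}). Since the composite Gauss quadrature $\QQ^{hp}_{(0,t)}$ has positive weights, for any Banach-space valued integrand $G$ we have the triangle-type inequality
\begin{align*}
  \norm{\QQ^{hp}_{(0,t)} G}_{\widetilde{H}^\beta(\Omega)} \;\leq\; \QQ^{hp}_{(0,t)}\bigl(\norm{G(\cdot)}_{\widetilde{H}^\beta(\Omega)}\bigr),
\end{align*}
so the strategy is to bound $\norm{(W^q(\tau)-W^{q,h}(\tau)) f(t-\tau)}_{\widetilde{H}^\beta(\Omega)}$ by a scalar function of $\tau$ of the form $C\,\tau^{\gamma/2-1}\Nquad\kquad\,e^{-\omega\Ndof^\mu}$ and then apply the stability estimate of Proposition~\ref{prop:hp_quad_estimates} to that scalar function.

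Next, I would carry out the pointwise bound exactly as in the proof of Lemma~\ref{lemma:hom:est_space_discr}, the only differences being the prefactor $\tau^{\gamma-1}$ and the appearance of $e_{\gamma,\gamma}$ in place of $e_{\gamma,1}$ (which is irrelevant since \eqref{eq:mittag_leffler_est} applies to both). For each quadrature node $z_n:=z(n\kquad)$ the hypothesis $\HHx$ resolves scales down to $b^{-1/2}e^{-\Nquad\kquad/2}$ ensures $|z_n|^{-1/2}\geq\varepsilon$, so that Assumption~\ref{ass:approx_of_HHx} together with the interpolation estimate \eqref{eq:interpolation_est} (interpolating between the $L^2$ and $H^1$ bounds exactly as in the application in Lemma~\ref{lemma:hom:est_space_discr}) gives, since $f(t-\tau)$ is uniformly analytic with constants independent of $\tau$,
\begin{align*}
  \norm{[R(z_n)-R_h(z_n)]\,f(t-\tau)}_{\widetilde{H}^\beta(\Omega)} \;\lesssim\; |z_n|^{\beta/2-1}\,e^{-\omega\Ndof^\mu}.
\end{align*}
Using $|z'_n|\sim |z_n|$ from \eqref{eq:zprime_vs_z} and the key scalar inequality \eqref{eq:bounding_the_integrand} with $\tau$ in place of $t$, the sum over $|n|\leq \Nquad$ is estimated by
\begin{align*}
  \norm{(W^q(\tau)-W^{q,h}(\tau))\,f(t-\tau)}_{\widetilde{H}^\beta(\Omega)}
  \;\lesssim\; \tau^{\gamma-1}\,\kquad\,\Nquad\,\tau^{-\gamma/2}\,e^{-\omega\Ndof^\mu}
  \;=\; \tau^{\gamma/2-1}\,\kquad\,\Nquad\,e^{-\omega\Ndof^\mu}.
\end{align*}

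Finally, I would apply the stability estimate of Proposition~\ref{prop:hp_quad_estimates} to the scalar map $\tau\mapsto \tau^{\gamma/2-1}$. Setting $\alpha:=1-\gamma/2\in[1/2,1)$, this function is holomorphic on $\C\setminus(-\infty,0]$, hence in particular on a sector about $\R_{>0}$, and satisfies $|\tau^{\gamma/2-1}|\leq |\tau|^{-\alpha}$ with $\alpha<1$, so Proposition~\ref{prop:hp_quad_estimates} yields $\QQ^{hp}_{(0,t)}(\tau^{\gamma/2-1})\lesssim t^{1-\alpha}=t^{\gamma/2}$. Combining this with the pointwise bound produces the claimed estimate. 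The only real obstacle is to ensure that the pointwise bound has the integrable-at-the-left-endpoint form with $\alpha<1$; for $\gamma\in(0,1]$ this is automatic since $1-\gamma/2<1$, so the $hp$-quadrature stability is applicable throughout.
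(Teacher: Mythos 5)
Your proposal is correct and follows essentially the same route as the paper: derive the pointwise-in-$\tau$ bound $\norm{(W^q(\tau)-W^{q,h}(\tau))f(t-\tau)}_{\widetilde{H}^\beta(\Omega)}\lesssim \tau^{\gamma/2-1}\,\Nquad\,\kquad\,e^{-\omega\Ndof^\mu}$ by repeating the argument of Lemma~\ref{lemma:hom:est_space_discr}, then invoke the stability part of Proposition~\ref{prop:hp_quad_estimates} with $\alpha=1-\gamma/2$ to produce the factor $t^{\gamma/2}$. You simply spell out the details (positivity of the Gauss weights, the scale condition $|z_n|^{-1/2}\geq\varepsilon$, the use of \eqref{eq:interpolation_est} and \eqref{eq:bounding_the_integrand}) that the paper leaves implicit.
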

\begin{proof}
  The proof is very similar to Lemma~\ref{lemma:hom:est_space_discr}, we only have to account for
  the extra quadrature step.
  Since the proof only relied on the analyticity of the right-hand side $f$ and the estimate~\eqref{eq:mittag_leffler_est},
  we can analogously estimate for all $\tau \in (0,t)$ 
  \begin{align*}
    \norm{W^{q}(\tau)f(t-\tau) - W^{q,h}(\tau)f(t-\tau)}_{\widetilde{H}^{\beta}(\Omega)}
    &\lesssim
      \tau^{\gamma/2-1} \Nquad \kquad e^{-\omega \Ndof^\mu}.
  \end{align*}
  Using the stability estimate for $hp$-quadrature in~\eqref{eq:hp_quad_est}, the stated result follows.  
\end{proof}

Next, we consider the error due to the sinc-quadrature: 
\begin{lemma}
  \label{lemma:inh:est_sinc_discr}
  For $0<\varepsilon<\min\big(\frac{\beta}{2},\frac{1}{4}\big)$, $\Nquad \in \N$ and $\kquad \sim \Nquad^{-1/2}$ we can estimate
  \begin{align}
    \label{eq:inh_est_space_discr}
    \norm{\uiq(t)-\uiqq(t)}_{\widetilde{H}^{\beta}(\Omega)}
    &\lesssim
      t^{\gamma/2}  e^{-\omega \Nquad^{1/2}} \sup_{\tau \in (0,t)}{\norm{f(\tau)}_{H^{2\varepsilon}(\Omega)}}.
  \end{align}

\end{lemma}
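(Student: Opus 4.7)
The plan is to mimic the proof of Lemma~\ref{lemma:hom:est_sinc_discr}, applied pointwise in the convolution variable $\tau$, and then to invoke the stability estimate of the $hp$-quadrature (Proposition~\ref{prop:hp_quad_estimates}) to integrate up the resulting $\tau$-dependent bound. Since $\QQ^{hp}_{(0,t)}$ is a finite sum with positive weights, the triangle inequality gives
$\norm{\QQ^{hp}_{(0,t)}G}_{\widetilde{H}^{\beta}(\Omega)}\leq \QQ^{hp}_{(0,t)}\bigl(\norm{G(\cdot)}_{\widetilde{H}^{\beta}(\Omega)}\bigr)$
for any Banach-valued integrand $G$, so it suffices to produce a pointwise bound on $\norm{(W(\tau)-W^q(\tau))f(t-\tau)}_{\widetilde{H}^{\beta}(\Omega)}$ that exhibits the correct integrable $\tau$-singularity, and then apply the second half of \eqref{eq:hp_quad_est}.

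First I would expand spectrally: writing $f(t-\tau)=\sum_{j}f_j(t-\tau)\varphi_j$, the mode-by-mode sinc-quadrature error is controlled by the scalar function
\begin{align*}
h_{\lambda}(y,\tau):=\tau^{\gamma-1}\frac{1}{2\pi\ii}e_{\gamma,\gamma}(-\tau^\gamma z(y)^\beta)\,z'(y)\,(z(y)-\lambda)^{-1},
\end{align*}
which differs from $g_{\lambda}$ of Lemma~\ref{lemma:hom:g_lambda_estimates} only by the extra prefactor $\tau^{\gamma-1}$ and by $e_{\gamma,\gamma}$ in place of $e_{\gamma,1}$. Since the Mittag-Leffler bound \eqref{eq:mittag_leffler_est} applies to both, the proof of Lemma~\ref{lemma:hom:g_lambda_estimates} carries over verbatim and yields
\begin{align*}
\abs{h_{\lambda}(y,\tau)} \lesssim \tau^{\gamma/2-1}\lambda^{-\beta/2+\varepsilon}e^{-\varepsilon\abs{\Re(y)}},\qquad y\in B_H,
\end{align*}
together with the corresponding $N(B_H)$-bound. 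Proposition~\ref{prop:sinc_quad} then gives the sinc-error $\mathcal{E}_j(\tau)$ for the $j$-th mode, and bounding the infinite tail as in Lemma~\ref{lemma:hom:est_sinc_discr} produces
$\abs{\mathcal{E}_j(\tau)}\lesssim \tau^{\gamma/2-1}\lambda_j^{-\beta/2+\varepsilon}\bigl(e^{-\pi d/\kquad}+e^{-\varepsilon\Nquad\kquad}\bigr)$; with $\kquad\sim\Nquad^{-1/2}$ both exponentials collapse to $e^{-\omega\Nquad^{1/2}}$.

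Squaring the $\widetilde{H}^{\beta}(\Omega)$-expansion gives
\begin{align*}
\norm{(W(\tau)-W^q(\tau))f(t-\tau)}_{\widetilde{H}^{\beta}(\Omega)}^2
&\lesssim \sum_j \lambda_j^\beta\abs{\mathcal{E}_j(\tau)}^2\abs{f_j(t-\tau)}^2\\
&\lesssim \tau^{\gamma-2} e^{-2\omega\Nquad^{1/2}}\norm{f(t-\tau)}_{\widetilde{H}^{2\varepsilon}(\Omega)}^2,
\end{align*}
so, after taking square roots and using Proposition~\ref{prop:no_bc_for_small_sobolev} to trade $\widetilde{H}^{2\varepsilon}$ for $H^{2\varepsilon}$ (legal since $2\varepsilon<1/2$), we obtain the pointwise estimate $\norm{(W(\tau)-W^q(\tau))f(t-\tau)}_{\widetilde{H}^{\beta}(\Omega)}\lesssim \tau^{-(1-\gamma/2)}e^{-\omega\Nquad^{1/2}}\sup_{\sigma\in(0,t)}\norm{f(\sigma)}_{H^{2\varepsilon}(\Omega)}$.

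Finally I apply the $hp$-quadrature stability bound in \eqref{eq:hp_quad_est} with $\alpha=1-\gamma/2\in(0,1)$: the positive-weight observation at the start lets me pull the $\widetilde{H}^{\beta}$-norm inside, and Proposition~\ref{prop:hp_quad_estimates} converts the $\tau^{-\alpha}$ majorant into a factor $t^{1-\alpha}=t^{\gamma/2}$. This produces the claimed bound. The only delicate point is step 2 (verifying the analog of Lemma~\ref{lemma:hom:g_lambda_estimates} for $h_\lambda$), but it is essentially a verbatim copy since every estimate used depended only on \eqref{eq:mittag_leffler_est} and the contour estimates \eqref{eq:resolvent_estimates_1}--\eqref{eq:resolvent_estimates_3}; no new ingredients are required.
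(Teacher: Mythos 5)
Your proposal is correct and follows essentially the same route as the paper: a pointwise-in-$\tau$ bound $\norm{(W(\tau)-W^q(\tau))f(t-\tau)}_{\widetilde{H}^{\beta}(\Omega)}\lesssim \tau^{\gamma/2-1}e^{-\omega\Nquad^{1/2}}\norm{f(t-\tau)}_{H^{2\varepsilon}(\Omega)}$ obtained by repeating the argument of Lemma~\ref{lemma:hom:est_sinc_discr} with $e_{\gamma,\gamma}$ in place of $e_{\gamma,1}$, followed by the $hp$-quadrature stability bound of Proposition~\ref{prop:hp_quad_estimates} with $\alpha=1-\gamma/2$. The paper's proof is terser but identical in substance; your filled-in details (the positivity of the Gauss weights justifying pulling the norm inside $\QQ^{hp}_{(0,t)}$, and the verbatim transfer of the $g_\lambda$ estimates to $h_\lambda$) are accurate.
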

\begin{proof}
  We again focus on pointwise estimates of $W-W^q$, this time proceeding analogously to Lemma~\ref{lemma:hom:est_sinc_discr}.
  We get for $\tau \in (0,t)$:
  \begin{align*}
      \norm{W(\tau)f(t-\tau) - W^{q}f(t-\tau)}_{\widetilde{H}^{\beta}(\Omega)}
    &\lesssim  \tau^{\gamma/2-1} e^{-\omega \Nquad^{1/2} } \norm{f(t-\tau)}_{H^{2\varepsilon}(\Omega)}.
  \end{align*}
  Using the stability of the $hp$-quadrature then gives~\eqref{eq:inh_est_space_discr}.
\end{proof}

The final step is analyzing the error due to $hp$-quadrature.
\begin{lemma}
  \label{lemma:inh:est_hp_quad}
  Assume that $f: [0,T] \to L^2(\Omega)$ is uniformly analytic.
  Assume we are using an $hp$-quadrature with $\Nhpquad$ refinement layers and
  maximum order $p=\Nhpquad$. 
  Then we can estimate
  \begin{align*}
    \norm{\uui(t)-\uiq(t)}_{\widetilde{H}^{\beta}(\Omega)}
  &\lesssim t^{\gamma/2} e^{-\omega \Nhpquad}.
  \end{align*}
  The constant depends on $f$ and the mesh grading factor $\sigma$ used for the $hp$-quadrature.
\end{lemma}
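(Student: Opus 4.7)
The plan is to apply Proposition~\ref{prop:hp_quad_estimates} to the $\widetilde{H}^\beta(\Omega)$-valued integrand $\tau \mapsto W(\tau) f(t-\tau)$, and to reduce to the scalar-valued setting of that proposition by duality. Fix a bounded linear functional $v$ on $\widetilde{H}^\beta(\Omega)$ with $\|v\|_{*} = 1$, and define the scalar function $g_v(\tau) := \langle v, W(\tau) f(t-\tau)\rangle$. Linearity of $\QQ^{hp}_{(0,t)}$ gives
\begin{align*}
\int_{0}^{t} g_v(\tau)\,d\tau - \QQ^{hp}_{(0,t)} g_v = \langle v, \uui(t) - \uiq(t)\rangle,
\end{align*}
so it suffices to bound this quantity by $C\, t^{\gamma/2} e^{-\omega \Nhpquad}$ uniformly in $v$ and then take a supremum.

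The first task is to verify that $g_v$ admits an analytic extension to a sector $\{z \neq 0 : |\operatorname{Arg} z| < \delta'\}$. The factor $\tau^{\gamma-1}$ is analytic on $\C\setminus(-\infty,0]$; since the Mittag-Leffler function is entire and the spectral calculus for $\LL^\beta$ is defined through it, the operator $e_{\gamma,\gamma}(-\tau^\gamma \LL^\beta)$ is analytic in $\tau$ on any sector of opening angle $<\pi$; and by Definition~\ref{def:uniformly_analytic}(\ref{it:uniformly_holomorphic_iii}) the map $s \mapsto f(s)$ extends weakly holomorphically to an open set containing a sector at the origin, and in particular to a complex neighborhood of $(0,t]$, so $\tau \mapsto f(t-\tau)$ is analytic near the origin for $\tau$ in a sufficiently narrow sector. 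Combining these three factors gives analyticity of $g_v$ on the required sector.

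The second task is the pointwise bound $|g_v(z)| \lesssim |z|^{\gamma/2-1}$, which provides the exponent $\alpha = 1-\gamma/2 \in [0,1)$ needed by~\eqref{eq:hp_quad_assumptions:bound}. Expanding in the eigenbasis $\{\varphi_j\}$ and using $\|v\|_{*} = 1$, the estimate reduces to bounding $\|W(z) f(t-z)\|_{\widetilde{H}^\beta(\Omega)}$. Applying~\eqref{eq:mittag_leffler_est} to each eigenmode yields
\begin{align*}
\|W(z) f(t-z)\|_{\widetilde{H}^\beta(\Omega)}^2 \lesssim |z|^{2(\gamma-1)} \sum_{j=0}^\infty \frac{\lambda_j^\beta}{\bigl(1 + |z|^\gamma \lambda_j^\beta\bigr)^2} \bigl|(f(t-z), \varphi_j)_{L^2(\Omega)}\bigr|^2,
\end{align*}
and the elementary inequality $\lambda_j^\beta/(1+|z|^\gamma\lambda_j^\beta)^2 \le |z|^{-\gamma}$ (the same trick used in~\eqref{eq:bounding_the_integrand}) reduces the sum to $|z|^{-\gamma} \|f(t-z)\|_{L^2(\Omega)}^2$. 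The $L^2$-norm $\|f(t-z)\|_{L^2(\Omega)}$ is uniformly bounded on the sector by Definition~\ref{def:uniformly_analytic}(\ref{it:uniformly_holomorphic_iii}) (via the supremum over unit $v \in L^2(\Omega)$), yielding $|g_v(z)| \lesssim |z|^{\gamma/2-1}$ with constant independent of $v$.

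With these two hypotheses verified, Proposition~\ref{prop:hp_quad_estimates} applied to $g_v$ with $\alpha = 1 - \gamma/2$ and $p = \Nhpquad$ delivers $|\int_0^t g_v - \QQ^{hp}_{(0,t)}g_v| \lesssim t^{\gamma/2} e^{-\omega \Nhpquad}$, and taking the supremum over $\|v\|_{*} \le 1$ concludes the proof. The main obstacle is the uniformity in $j$ of the Mittag-Leffler bound~\eqref{eq:mittag_leffler_est} on the complex sector: one must choose the opening angle of the $\tau$-sector small enough that $-\tau^\gamma \lambda_j^\beta$ stays uniformly out of the excluded cone around the positive real axis, which requires $|\operatorname{Arg}\tau| < \pi(2-\gamma)/(2\gamma)$; this bound is strictly positive for every $\gamma \in (0,1]$, so the construction is always feasible.
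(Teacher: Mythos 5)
Your proof is correct and follows essentially the same route as the paper: both reduce the vector-valued quadrature error to scalar applications of Proposition~\ref{prop:hp_quad_estimates} with $\alpha=1-\gamma/2$, using the identical splitting trick on the Mittag--Leffler bound to obtain $|\lambda^{\beta/2}\tau^{\gamma-1}e_{\gamma,\gamma}(-\tau^{\gamma}\lambda^{\beta})|\lesssim|\tau|^{\gamma/2-1}$ uniformly in $\lambda$, together with the same sector argument for $-\tau^{\gamma}\lambda^{\beta}$. The only (cosmetic) difference is that you pass to scalars by duality with a uniform bound on $\|f(t-z)\|_{L^2(\Omega)}$, whereas the paper decomposes mode-by-mode in the eigenbasis; your variant handles the $\ell^2$-summation over modes slightly more explicitly.
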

\begin{proof}
  Using the notation
    $W_{\lambda}(\tau):=\tau^{\gamma-1}e_{\gamma,\gamma}(-\tau^\gamma \lambda^\beta)$,
  we write via the  spectral decomposition
  \begin{align*}
    \uui(t)&=\sum_{j=0}^{\infty}{\int_{0}^{t}{ W_{\lambda_j}(\tau)\, \big(f(t-\tau),\varphi_j\big)_{L^2(\Omega)} \,d\tau}\, \varphi_j}, \\
    \uiq(t)&=\sum_{j=0}^{\infty}{\QQ^{hp}_{(0,t)}
           \Big( W_{\lambda_j}(\cdot)\, \big(f(t-\cdot),\varphi_j\big)_{L^2(\Omega)} \Big) \,\varphi_j}.
  \end{align*}
  By Definition~\ref{def:uniformly_analytic}~(\ref{it:uniformly_holomorphic_iii}),
    the integrands are analytic on  a set $\mathcal{O}$ containing a
    positive sector.
  Using Proposition~\ref{prop:hp_quad_estimates}, it is sufficient to show for all $\tau \in \mathcal{O}$ and $\lambda>0$ that
  $
  \abs{\lambda^{\beta/2} W_{\lambda}(\tau) } \leq \abs{\tau}^{\gamma/2-1}
  $
  with a constant independent of $\lambda$.
  We note that $-\tau^{\gamma} \lambda^\beta$ is always in the sector required for~\eqref{eq:mittag_leffler_est}.
  This can be seen since $-\tau^\gamma$ is always in the left-half plane with argument between $-\gamma \pi/4$ and $\gamma \pi/4$.
  Multiplying with $\lambda^\beta>0$ does not change the argument of the complex number.

  Therefore, we can estimate using~\eqref{eq:mittag_leffler_est}:
  \begin{align*}
    \abs{\lambda^{\beta/2} W_{\lambda}(\tau) }
    &\lesssim \abs{\tau}^{\gamma-1}  \frac{\lambda^{\beta/2}}{1+\abs{\tau}^\gamma \lambda^\beta}
      \lesssim \abs{\tau}^{\gamma-1}  \left(\frac{1}{1+\abs{\tau}^\gamma \lambda^\beta}\right)^{1/2}
      \left(\frac{\lambda^\beta}{1+\abs{\tau}^\gamma \lambda^\beta}\right)^{1/2} 
    \lesssim    \abs{\tau}^{\gamma/2-1}.
  \end{align*}
  Applying Proposition~\ref{prop:hp_quad_estimates} then concludes the proof.
\end{proof}

\section{The general result}
Combining the approximations for the  inhomogeneous and homogeneous scheme, we define the fully discrete approximation
as
$$
\uufd(t):=\uqh(t) + \uiqqh(t).
$$
The corresponding error analysis is then a simple combination of the individual error estimates.
Since we will use it later on, we also include a stability result with respect to the initial data. 

\begin{theorem}
  \label{thm:the_big_theorem}
  Assume that
  $u_0$ is analytic on a neighborhood of $\overline{\Omega}$ and assume that $f$ is uniformly analytic (cf. Definition~\ref{def:uniformly_analytic}).  
  Use $\Nquad \in \N$ quadrature points for the sinc-quadrature with $\kquad := \kappa \,\Nquad^{-1/2}$,
  $\Nhpquad\sim \sqrt{\Nquad}$ layers for the hp-quadrature. Assume that $\HHx$ resolves the scales down to
  $b^{-1/2} \, e^{-(\kappa/2) \sqrt{\Nquad}}$.
  Then, the following estimate holds:
  \begin{align*}
    \norm{\uu(t) - \uufd(t)}_{\widetilde{H}^{\beta}(\Omega)}
    &\lesssim \big(t^{-\gamma/2}+ t^{\gamma/2}\big)\,\big(e^{-\omega \sqrt{\Nquad}} + \sqrt{\Nquad} \,e^{-\omega \Ndof^\mu}\big).
  \end{align*}
  In addition, the following stability estimate holds for any $\varepsilon \in (0,1/2)$:
  \begin{align}
      \label{eq:fd_stabiltiy}
      \|\uufd(t)\|_{\widetilde{H}^{\beta}(\Omega)}
    &\lesssim C(\varepsilon)\,\min\big(\sqrt{\Nquad},t^{-\varepsilon \gamma}\big)
      \Big(t^{-\gamma/2}\,\|u_0\|_{L^2(\Omega)}
      +t^{\gamma/2}\max_{0\leq \tau \leq t}{\|f(\tau)\|_{L^2(\Omega)}}\Big).
    \end{align}
\end{theorem}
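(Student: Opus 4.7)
The plan is to split the total error along the superposition $\uu - \uufd = (u_{\text{hom}} - \uqh) + (\uui - \uiqqh)$, where $u_{\text{hom}}(t) := e_{\gamma,1}(-t^\gamma \LL^\beta) u_0$ is the solution of the pure initial value part of~\eqref{eq:model_problem}. For the homogeneous piece I insert the semi-discrete object $\uq$ and apply the triangle inequality together with Lemma~\ref{lemma:hom:est_sinc_discr} and Lemma~\ref{lemma:hom:est_space_discr}; under the stated choices $\kquad = \kappa \Nquad^{-1/2}$ and $\HHx$ resolving scales down to $b^{-1/2} e^{-\kappa\sqrt{\Nquad}/2}$, both contributions carry the $t^{-\gamma/2}$ prefactor and produce the claimed exponential rate. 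For the inhomogeneous piece I insert the two intermediates $\uiq$ and $\uiqq$ and apply Lemmas~\ref{lemma:inh:est_hp_quad},~\ref{lemma:inh:est_sinc_discr}, and~\ref{lemma:inh:est_space_discr}; the scaling $\Nhpquad \sim \sqrt{\Nquad}$ makes the $hp$-quadrature rate $e^{-\omega \Nhpquad}$ match the sinc rate, and each term acquires the $t^{\gamma/2}$ prefactor. Summing the two contributions yields the convergence estimate.

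For the stability bound~\eqref{eq:fd_stabiltiy}, I bound $\|\uqh(t)\|_{\widetilde{H}^\beta(\Omega)}$ and $\|\uiqqh(t)\|_{\widetilde{H}^\beta(\Omega)}$ separately. Starting from the definition~\eqref{eq:fully_discrete} of $\uqh$, the triangle inequality combined with Proposition~\ref{prop:resolvent_estimates_l2}, the Mittag-Leffler bound~\eqref{eq:mittag_leffler_est}, and $\abs{z'(y_n)} \sim \abs{z(y_n)}$ from~\eqref{eq:zprime_vs_z} reduces matters to controlling the scalar sum
\begin{equation*}
S(t) := \kquad \sum_{n=-\Nquad}^{\Nquad} \frac{\abs{z(y_n)}^{\beta/2}}{1+t^\gamma \abs{z(y_n)}^\beta}.
\end{equation*}
I derive two complementary bounds on $S(t)$. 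The first, $S(t) \lesssim \sqrt{\Nquad}\,t^{-\gamma/2}$, follows immediately from the pointwise estimate~\eqref{eq:bounding_the_integrand} already used in Lemma~\ref{lemma:hom:est_space_discr} together with the term count $2\Nquad+1$ and the scaling $\kquad \sim \Nquad^{-1/2}$. The second, $S(t) \lesssim C(\varepsilon)\, t^{-\gamma/2-\varepsilon\gamma}$, I obtain via the elementary inequality $\min(a,b) \leq a^{1-\theta} b^\theta$ applied with $a = \abs{z}^{\beta/2}$, $b = t^{-\gamma}\abs{z}^{-\beta/2}$, and $\theta = 1/2+\varepsilon$, which gives $\frac{\abs{z}^{\beta/2}}{1+t^\gamma\abs{z}^\beta} \lesssim t^{-\gamma(1/2+\varepsilon)}\abs{z}^{-\varepsilon\beta}$; the remaining geometric sum $\kquad \sum_n e^{-\varepsilon\beta\abs{n}\kquad}$ is uniformly bounded with constant $C(\varepsilon) \sim 1/\varepsilon$.

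For the inhomogeneous term, I exploit positivity of the Gauss weights to pull the norm inside the $hp$-quadrature, $\|\uiqqh(t)\|_{\widetilde{H}^\beta(\Omega)} \leq \QQ^{hp}_{(0,t)} \|W^{q,h}(\cdot) f(t-\cdot)\|_{\widetilde{H}^\beta(\Omega)}$, and repeat the above pointwise analysis on $W^{q,h}$ (which differs only through the $e_{\gamma,\gamma}$ kernel and the extra $\tau^{\gamma-1}$ factor) to obtain $\|W^{q,h}(\tau) g\|_{\widetilde{H}^\beta(\Omega)} \lesssim \min(\sqrt{\Nquad}, \tau^{-\varepsilon\gamma})\,\tau^{\gamma/2-1}\|g\|_{L^2(\Omega)}$. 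For $\varepsilon \in (0,1/2)$ the exponent $\gamma/2-1-\varepsilon\gamma$ lies strictly above $-1$, so the stability clause of Proposition~\ref{prop:hp_quad_estimates} (whose proof uses only positivity of weights and a pointwise bound and thus also applies to the non-holomorphic norm function) yields the factor $\min(\sqrt{\Nquad},t^{-\varepsilon\gamma})\,t^{\gamma/2}$. Adding the homogeneous and inhomogeneous contributions gives~\eqref{eq:fd_stabiltiy}.

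The main obstacle is the dual bound on $S(t)$: the pointwise estimate that was sufficient for the convergence proofs in Section~\ref{sect:homogeneous_problem} yields only the $\sqrt{\Nquad}$ factor, and obtaining the alternative $t^{-\varepsilon\gamma}$ bound (which is what keeps the stability constant uniform in $\Nquad$ away from $t=0$) requires the sharper interpolation argument with exponent $\theta > 1/2$ so that the resulting geometric sum over the sinc nodes becomes summable independently of $\Nquad$.
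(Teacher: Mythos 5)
Your proposal is correct and follows essentially the same route as the paper: the convergence estimate is obtained by collecting the lemmas of Sections~\ref{sect:homogeneous_problem} and~\ref{sect:inhomogeneous_problem}, and the stability bound rests on exactly the two complementary estimates for the sinc sum (the $\sqrt{\Nquad}$ count for $\varepsilon=0$ versus the interpolation with exponent $1/2+\varepsilon$ giving a geometric, $\Nquad$-independent sum), combined with the positivity/stability of the $hp$-quadrature for the inhomogeneous part.
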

\begin{proof}
  For the convergence result, we just collect the different convergence results of Sections~\ref{sect:homogeneous_problem} and~\ref{sect:inhomogeneous_problem}.

  To prove the stability result, we start with only the homogeneous contribution.
  By an analogous computation to~\eqref{eq:bounding_the_integrand} we get from the definition of $\uqh$ and the estimate on  the resolvent~\eqref{eq:resolvent_estimates_l2}
  for $\varepsilon \in (0,1/2)$:
  \begin{align*}
    \norm{\uqh(t)}_{\widetilde{H}^{\beta}(\Omega)}
    &\lesssim k \sum_{n=-\Nquad}^{\Nquad}{ t^{-\gamma(1/2+\varepsilon)} \abs{z(y_n)}^{-\beta(1/2+\varepsilon)} \abs{z'(y_n)}\norm{R_h(y_n) u_{0}}_{\widetilde{H}^{\beta}(\Omega)} } \\
    &\lesssim  t^{-\gamma(1/2+\varepsilon)} \|u_{0}\|_{L^2(\Omega)}.
  \end{align*}
  From which the stated estimate follows readily via~\eqref{eq:zprime_vs_z}.
  If $\varepsilon=0$, the same computation can be made, but we end up with an additional factor $\Nquad k$
  compensating for the summation.
  The inhomogeneous contribution follows along the same lines, but using the stability
  of the $hp$-Quadrature~\eqref{eq:hp_quad_est}. 
\end{proof}

\subsection{Space-time and time robust estimates}
\label{sect:spacetime_and_time_robust_estimates}
Up to now, we have looked at the error in the $\widetilde{H}^{\theta}$-norm pointwise in time.
While such an approach is natural, the resulting estimates
deteriorate for small times $t$ close to zero like $\bigO(t^{-\gamma/2})$.
If the initial condition does not satisfy the boundary conditions, we cannot hope to derive $t$-robust estimates in the energy norm $\widetilde{H}^{\beta}$. In this section, we derive a different estimate which does not suffer from this deterioration.

For the following results to hold, we have to make an additional assumption on $\HHx$, a version of which is also already
present in~\cite{hp_for_heat}:
\begin{assumption}
  \label{ass:approx_of_unitial_condtiion}
  There exists a fixed neighborhood $\widetilde{\Omega}$ of $\overline{\Omega}$
    and constants $\theta$, $\omega$, $\mu>0$
  such that for all $u_0$ that are analytic on  $\widetilde{\Omega}$, 
  there exists a function $u_{h,0} \in \HHx$ and constants $C_{\text{stab}}$, $C_{\text{approx}}>0$ (depending on $u_0$) such that
  \begin{align*}
    \norm{u_{h,0}}_{\mathbb{V}_{h}^\theta} \leq C_{\text{stab}}
    \qquad \text{ and } \qquad \norm{u_{0} - u_{h,0}}_{L^2(\Omega)} \leq C_{\text{approx}} e^{- \omega \mathcal{N}_{\Omega}^\mu },
  \end{align*}
  where $\Ndof:=\operatorname{dim}\big(\HHx\big)$ and
  $
  \HHx^\theta:=\big[\big(\HHx, L^2(\Omega)\big), \big(\HHx, H^1(\Omega)\big) \big]_{\theta,2}.
  $
\end{assumption}

We start by refining the estimates on the resolvent operator
  from Proposition~\eqref{prop:resolvent_estimates_l2}, establishing that if we insert more regularity than $L^2$ in the
argument, we get improved damping properties.
\begin{lemma}
  \label{lemma:resolvent_estimates}
  Let $z \in \sector$.  Assume $u_0 \in \widetilde{H}^{\theta}(\Omega)$
  and $u_{h,0} \in \HHx^\theta$ for some $\theta \in [0,1]$. Then the following estimates hold for $\alpha \in [\theta,1]$:
  \begin{align}
    \label{eq:resolvent_estimates}
    \norm{R(z) u_0}_{\widetilde{H}^{\alpha}(\Omega)}
    &\lesssim \abs{z}^{-1+\frac{\alpha-\theta}{2}} \norm{u_0}_{\widetilde{H}^{\theta}(\Omega)} \quad  \text{and}\quad
    \norm{R_h(z) u_{h,0}}_{\widetilde{H}^{\alpha}(\Omega)}
    \lesssim \abs{z}^{-1+\frac{\alpha-\theta}{2}} \norm{u_{h,0}}_{\HHx^\theta}.
  \end{align}
  The implied constant depends on $\delta$, $A$, $c$, and the constants in the
  definition of $\sector$.
\end{lemma}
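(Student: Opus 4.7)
The plan is to prove the continuous and discrete estimates in parallel via spectral calculus, with the key input being a geometric lemma about distances on the contour $\sector$.

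First, I would establish the geometric estimate: for any $z \in \sector$ and any $\lambda \geq \lambda_1 > 0$ (where $\lambda_1$ is the smallest eigenvalue of $\LL$), one has
\begin{align*}
  |z - \lambda| \gtrsim \max(|z|, \lambda).
\end{align*}
This is a direct check from Definition~\ref{def:domain_of_ellipticity}: the set removed from $\C$ to form $\sector$ consists of a cone of opening $\pi/4$ based at $z_0$, which contains $[z_0, \infty) \supseteq [\lambda_1, \infty)$, together with the ball $B_{\varepsilon_0}(0)$. A short case analysis (small $|z|$ versus large $|z|$) then yields the bound. Raising to a power and splitting, this gives
\begin{align*}
  |z-\lambda|^2 \;=\; |z-\lambda|^{2-\nu}\,|z-\lambda|^{\nu} \;\gtrsim\; |z|^{2-\nu}\lambda^{\nu}, \qquad \nu := \alpha - \theta \in [0,1].
\end{align*}

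For the continuous estimate, I expand $u_0 = \sum_j (u_0,\varphi_j)\varphi_j$ into the eigenbasis of $\LL$, so that $R(z) u_0 = \sum_j (z-\lambda_j)^{-1}(u_0,\varphi_j)\varphi_j$. Using the spectral characterization of the $\widetilde{H}^\theta$ norm stated after \eqref{eq:interpolation_est} together with the geometric bound, one obtains
\begin{align*}
  \|R(z)u_0\|_{\widetilde{H}^\alpha}^2 \;\sim\; \sum_j \frac{\lambda_j^\alpha}{|z-\lambda_j|^2}|(u_0,\varphi_j)|^2 \;\lesssim\; |z|^{-2+(\alpha-\theta)} \sum_j \lambda_j^\theta |(u_0,\varphi_j)|^2 \;\sim\; |z|^{-2+(\alpha-\theta)}\|u_0\|_{\widetilde{H}^\theta}^2,
\end{align*}
which is exactly the claimed bound.

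For the discrete case, I would run the same argument using the spectral decomposition of the discrete operator $\LL_h:\HHx\to\HHx$ defined by $(\LL_h u_h,v_h)_{L^2}=a(u_h,v_h)$. Its eigenvalues satisfy $\lambda_{j,h}\geq \lambda_1$ by the min-max principle, so the geometric lemma applies with unchanged constants, yielding an estimate of $\|R_h(z)u_{h,0}\|_{\HHx^\alpha}$ by $|z|^{-1+(\alpha-\theta)/2}\|u_{h,0}\|_{\HHx^\theta}$. To convert the target norm $\HHx^\alpha$ to $\widetilde{H}^\alpha$ (as required by the statement), one invokes that the interpolation functor is order-preserving: the trivial inclusions $(\HHx,\|\cdot\|_{L^2})\hookrightarrow(L^2,\|\cdot\|_{L^2})$ and $(\HHx,\|\cdot\|_{H^1})\hookrightarrow(H^1_0,\|\cdot\|_{H^1})$ interpolate to a continuous embedding $\HHx^\alpha\hookrightarrow \widetilde{H}^\alpha$ with constant independent of $h$.

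The main obstacle is the geometric lemma $|z-\lambda|\gtrsim \max(|z|,\lambda)$ uniformly for $z\in\sector$ and $\lambda\geq \lambda_1$. Once this is nailed down, everything else is routine spectral bookkeeping. A subtle point for the discrete case is that one must not confuse the discrete norm $\HHx^\alpha$ (defined by interpolation inside $\HHx$) with the ambient $\widetilde{H}^\alpha$ norm; the interpolation-of-inclusions argument closes this gap cleanly and gives the stated $h$-independent constants.
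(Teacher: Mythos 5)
Your argument is correct in substance but takes a genuinely different route from the paper. You diagonalize: the key input is the geometric bound $\abs{z-\lambda}\gtrsim\max(\abs{z},\lambda)$ for $z\in\sector$ and $\lambda\geq\lambda_1>z_0$, which does hold (the removed cone has its vertex at $z_0$ strictly below the spectrum, so $\operatorname{dist}(\lambda,\sector)\gtrsim\lambda$, and the two cases $\abs{z}\geq 2\lambda$ and $\abs{z}<2\lambda$ give the other half), followed by the split $\abs{z-\lambda}^2\gtrsim\abs{z}^{2-(\alpha-\theta)}\lambda^{\alpha-\theta}$ and spectral bookkeeping. The paper never touches eigenvalues at all: it sets $w:=[R_h(z)-z^{-1}]u_{h,0}$, observes that $w$ solves the resolvent problem with a right-hand side carrying a factor $z^{-1}$ and the energy of $u_{h,0}$, extracts the endpoint bounds $\norm{w}_{L^2(\Omega)}\lesssim\abs{z}^{-1}\norm{u_{h,0}}_{L^2(\Omega)}$ and $\norm{w}_{H^1(\Omega)}\lesssim\abs{z}^{-1}\norm{u_{h,0}}_{H^1(\Omega)}$ from the same sectorial test-function argument as Proposition~\ref{prop:resolvent_estimates_l2}, and then interpolates twice (in $\alpha$ for the target norm, and in $\theta$ via the reiteration theorem). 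Your route is shorter and more transparent for the continuous estimate, where $\widetilde{H}^{\theta}(\Omega)\sim\mathbb{H}^{\theta}(\Omega)$ is available off the shelf. For the discrete estimate, the one step you assert rather than prove is that the discrete spectral norm $\big(\sum_j\lambda_{j,h}^{\theta}\abs{(u_h,\varphi_{j,h})}^2\big)^{1/2}$ is equivalent, uniformly in $h$, to the interpolation norm $\norm{\cdot}_{\HHx^{\theta}}$ appearing in the statement; this is true (the $K$-functional of the couple $\big((\HHx,\norm{\cdot}_{L^2(\Omega)}),(\HHx,a(\cdot,\cdot)^{1/2})\big)$ diagonalizes in the discrete eigenbasis, and the uniform equivalence $a(\cdot,\cdot)^{1/2}\sim\norm{\cdot}_{H^1(\Omega)}$ transfers through interpolation), but it should be stated explicitly, since avoiding exactly this identification is the reason the paper argues variationally. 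Your final embedding $\HHx^{\alpha}\hookrightarrow\widetilde{H}^{\alpha}(\Omega)$ by interpolating the two inclusions is fine and gives $h$-independent constants as claimed.
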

\begin{proof}
  We only consider the discrete case. The continuous one follows analogously, replacing $\HHx$ with $H^1_0(\Omega)$.
  We consider the function $w:=[R_h(z) - z^{-1}] u_{h,0}$. By elementary computations
    we get from~\eqref{eq:def_disc_resolvent} that
    $w$ solves for all $v \in \HHx$:
    \begin{align*}
      \big((z-c) w,v_h\big)_{L^2(\Omega)} - \big(A\nabla w,\nabla v_h\big)_{L^2(\Omega)}
      &=z^{-1}\big(\left(c  \, u_{h,0},v_h\right)_{L^2(\Omega)} + \left(A\nabla u_{h,0},\nabla v_h\right)_{L^2(\Omega)} \big) .
    \end{align*}
    Thus, $w$ solves the same variational problem as $R_h(z) u_{h,0}$ with modified right-hand side.
    If $u_{h,0} \in \HHx$, then $w \in \HHx$ is a valid test function, and we get
    fixing $\beta \in \C$ as in Proposition~\ref{prop:resolvent_estimates_l2} that
    \begin{multline*}
      \Re(\beta)\|A^{1/2}\nabla w\|^2_{L^2(\Omega)} - \Re(\beta z) \|w\|^2_{L^2(\Omega)} \\
    \lesssim
    |z|^{-1}\big(\|u_{h,0}\|_{L^2(\Omega)}\|w\|_{L^2(\Omega)} + \|\nabla u_{h,0}\|_{L^2(\Omega)}
    \|\nabla w\|_{L^2(\Omega)}\big).
  \end{multline*}
  From this and the already established bounds on $R_h(z) u_{h,0}$ from~\eqref{eq:resolvent_estimates_l2}, we readily get the estimates
  $$
  \|w\|_{H^1(\Omega)} \lesssim \abs{z}^{-1} \|u_{h,0}\|_{H^1(\Omega)}
  \quad \text{and}\quad
  \|w\|_{L^2(\Omega)} \lesssim \abs{z}^{-1} \|u_{h,0}\|_{L^2(\Omega)}.
  $$
  By interpolation, this gives
  \begin{align*}
    \|w\|_{H^{\alpha}(\Omega)} \lesssim \abs{z}^{-1} \|u_{h,0}\|_{\HHx^\alpha}.
  \end{align*}
  From the definition of $w$ and \eqref{eq:resolvent_estimates_l2}, we get the estimates:
  \begin{align*}
    \|R_h(z) u_{h,0} \|_{\widetilde{H}^{\alpha}(\Omega)} &\lesssim \abs{z}^{-1} \|u_{h,0}\|_{\HHx^\alpha} ,
                                         \quad \text{and}\quad
    \|R_h(z) u_{h,0}\|_{\widetilde{H}^{\alpha}(\Omega)} \lesssim \abs{z}^{-1+\alpha/2} \|u_{h,0}\|_{L^2(\Omega)}.
  \end{align*}
  By the reiteration theorem~\cite[Thm. 26.3]{tartar},
  we can write $\HHx^{\theta}=[L^2(\Omega),\HHx^{\alpha}]_{\frac{\theta}{\alpha}}.$
  Thus, we further interpolate the previous estimates to get for $\theta \in [0,\alpha]$:
 \begin{align*}
  \|R_h(z) u_{h,0} \|_{\widetilde{H}^{\alpha}(\Omega)} &\lesssim  
  \abs{z}^{-\frac{\theta}{\alpha} + (-1+\frac{\alpha}{2})(1-\frac{\theta}{\alpha})} \|u_{h,0}\|_{\HHx^\theta} ,
  =\abs{z}^{-1 + \frac{\alpha}{2}- \frac{\theta}{2}} \|u_{h,0}\|_{\HHx^\theta}.
  \qedhere
 \end{align*}

\end{proof}

We use this estimate to prove a time-robust estimate. We weaken the statement from an estimate which is pointwise in time to a space-time Sobolev norm.
Such norms were also considered in~\cite{hp_for_heat}.
\begin{theorem}
  \label{thm:spacetime_estimates}
  Let the Assumptions of Theorem~\ref{thm:the_big_theorem} hold. In addition, let
  Assumption~\ref{ass:approx_of_unitial_condtiion} hold, and use $u_{h,0}$ for the initial condition of the discrete method.

  Use $\Nquad \in \N$ quadrature points for the sinc-quadrature with $\kquad :=\kappa\, \Nquad^{-1/2}$,
  $\Nhpquad\sim \sqrt{\Nquad}$ layers for the hp-quadrature. Assume that $\HHx$ resolves the scales down to
  $b^{-1/2}\,e^{-(\kappa/2) \sqrt{\Nquad}}$.
  Then, the following estimate holds:
  \begin{align*}
    \int_{0}^{T}{t^{\gamma-1}
    \big\|{\uu(t) - \uufd(t)}\big\|_{\widetilde{H}^{\beta}(\Omega)}^2 \,dt}
    &\lesssim C(T)\big( e^{-\omega \sqrt{\Nquad}} + \sqrt{\Nquad}\,e^{-\omega \Ndof^\mu}\big).
  \end{align*}
  The constants depends on the end time $T$, the domain $\Omega$, the data $u_0$ and $f$, the constants from Assumption~\ref{ass:approx_of_HHx} and~\ref{ass:approx_of_unitial_condtiion},
  and on the details of the discretization, i.e., mesh grading, the  factor $\kappa$,
    and the ratio $\Nhpquad/\sqrt{\Nquad}$, but is independent of the accuracy parameters $\Nquad$, $\kquad$, $\Nhpquad$ or $\Ndof$.
\end{theorem}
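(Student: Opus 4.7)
My plan is to isolate the singular $t^{-\gamma/2}$ behavior from Theorem~\ref{thm:the_big_theorem} so that it is only integrated over a subinterval where it is harmless. Denoting by $E(t) := e_{\gamma,1}(-t^\gamma \LL^\beta)$ the exact semigroup, I write the total error as
\begin{equation*}
  u(t) - \uufd(t)
  \;=\; \underbrace{E(t)(u_0 - u_{h,0})}_{=:\,E_1(t)}
  \;+\; \underbrace{\bigl[E(t)u_{h,0} - \uqh(t; u_{h,0})\bigr]}_{=:\,E_2(t)}
  \;+\; \underbrace{\bigl[\uui(t) - \uiqqh(t)\bigr]}_{=:\,E_3(t)},
\end{equation*}
where $\uqh(\,\cdot\,; u_{h,0})$ is the homogeneous scheme~\eqref{eq:fully_discrete} fed with $u_{h,0}$ in place of $u_0$, as prescribed by the theorem.

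For $E_1$, expanding $v := u_0 - u_{h,0}$ in the eigenbasis of $\LL$ gives $\|E(t)v\|_{\widetilde{H}^\beta(\Omega)}^2 \sim \sum_j \lambda_j^\beta\bigl|e_{\gamma,1}(-t^\gamma \lambda_j^\beta)\bigr|^2 |v_j|^2$. The substitution $s = t^\gamma \lambda_j^\beta$ absorbs both the weight $t^{\gamma-1}$ and the factor $\lambda_j^\beta$, and applying~\eqref{eq:mittag_leffler_est} on the negative real axis yields the energy-type identity
\begin{equation*}
   \int_0^T t^{\gamma-1}\,\|E(t) v\|_{\widetilde{H}^\beta(\Omega)}^2\,dt \;\lesssim\; \|v\|_{L^2(\Omega)}^2,
\end{equation*}
so Assumption~\ref{ass:approx_of_unitial_condtiion} bounds the $E_1$-contribution by $e^{-2\omega \Ndof^\mu}$. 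For $E_3$, Lemmas~\ref{lemma:inh:est_space_discr}, \ref{lemma:inh:est_sinc_discr}, and~\ref{lemma:inh:est_hp_quad} give a pointwise bound of order $t^{\gamma/2}\bigl(e^{-\omega \sqrt{\Nquad}} + \sqrt{\Nquad}\,e^{-\omega \Ndof^\mu}\bigr)$, and the weight is harmless since $\int_0^T t^{2\gamma-1}\,dt \sim T^{2\gamma}$.

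The main work is on $E_2$. I split the time integral at a threshold $t_0 := e^{-c\sqrt{\Nquad}}$ with $c>0$ chosen sufficiently large below. On $[t_0, T]$ the pointwise estimate of Theorem~\ref{thm:the_big_theorem} gives $\|E_2(t)\|_{\widetilde{H}^\beta(\Omega)} \lesssim t^{-\gamma/2}\bigl(e^{-\omega \sqrt{\Nquad}} + \sqrt{\Nquad}\,e^{-\omega \Ndof^\mu}\bigr)$, and the logarithmic factor $\int_{t_0}^T t^{-1}\,dt = |\log(T/t_0)| \lesssim \sqrt{\Nquad}$ absorbs into a contribution of the announced order (after redefining $\omega$). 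For $t \in (0, t_0)$ I need a refined stability estimate which exploits the extra regularity $u_{h,0}\in\HHx^\theta$ from Assumption~\ref{ass:approx_of_unitial_condtiion}. Mimicking the proof of~\eqref{eq:fd_stabiltiy} but replacing Proposition~\ref{prop:resolvent_estimates_l2} by Lemma~\ref{lemma:resolvent_estimates} and redoing the sinc-sum as in~\eqref{eq:bounding_the_integrand} with exponent $(\beta-\theta)/2$ in place of $\beta/2$, I expect the upgraded bound
\begin{equation*}
   \bigl\|\uqh(t; u_{h,0})\bigr\|_{\widetilde{H}^\beta(\Omega)} \;\lesssim\; t^{-\gamma(\beta-\theta)/(2\beta)}\,\|u_{h,0}\|_{\HHx^\theta},
\end{equation*}
and the analogous bound for $\|E(t) u_{h,0}\|_{\widetilde{H}^\beta(\Omega)}$ is immediate by spectral calculus. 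Since $\gamma\theta/\beta > 0$ makes the resulting integral convergent at the origin, the triangle inequality yields $\int_0^{t_0} t^{\gamma-1}\|E_2(t)\|_{\widetilde{H}^\beta(\Omega)}^2\,dt \lesssim t_0^{\gamma\theta/\beta}\|u_{h,0}\|_{\HHx^\theta}^2$, and choosing $c$ large enough that $t_0^{\gamma\theta/\beta} \leq e^{-\omega \sqrt{\Nquad}}$ closes the argument.

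The principal obstacle is the refined discrete stability needed for $t \in (0, t_0)$: the clean energy identity used for $E_1$ rests on the spectral decomposition of $\LL$ and does not transfer directly to the sinc-discrete semigroup, so one has to revisit the stability computation of Theorem~\ref{thm:the_big_theorem} and weave in Lemma~\ref{lemma:resolvent_estimates} to make the regularity $u_{h,0}\in\HHx^\theta$ visible inside the quadrature sum.
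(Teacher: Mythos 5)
Your proposal follows essentially the same route as the paper's proof: split the time integral at an exponentially small threshold $t_0$, use the pointwise estimate of Theorem~\ref{thm:the_big_theorem} on $[t_0,T]$ (absorbing the resulting $\abs{\log t_0}\lesssim \sqrt{\Nquad}$ factor into the exponential), and on $(0,t_0)$ combine the refined resolvent bounds of Lemma~\ref{lemma:resolvent_estimates} with the extra regularity $u_{h,0}\in\HHx^{\theta}$ from Assumption~\ref{ass:approx_of_unitial_condtiion} to obtain an integrable singularity near $t=0$, before choosing $t_0$ exponentially small. The one step to tighten is that Theorem~\ref{thm:the_big_theorem} cannot be applied verbatim to $E_2(t)=E(t)u_{h,0}-\uqh(t;u_{h,0})$ because $u_{h,0}$ is not analytic on a neighborhood of $\overline{\Omega}$; as in the paper, insert $u_0$ and control the remainder $\uqh(t;u_0-u_{h,0})$ with the stability estimate~\eqref{eq:fd_stabiltiy}, which yields the harmless additional term $\sqrt{\Nquad}\,\norm{u_0-u_{h,0}}_{L^2(\Omega)}^2$.
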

\begin{proof}
  We only consider the homogeneous part, the other one is even simpler as the singular behavior
  for small times is not present. In this setting we have $\uufd=\uqh$.
  Let $t_0>0$ to be fixed later.
  In~\eqref{eq:fd_stabiltiy} we have seen that
  $\uufd$ depends continuously on the initial condition like
  $ t^{-\gamma/2} \sqrt{\Nquad} \norm{u_{h,0}}_{L^2(\Omega)}$.
  Replacing the discrete initial condition with $u_0$ allows us to apply Theorem~\ref{thm:the_big_theorem} and we get:
  \begin{align*}
    \int_{t_0}^{T}{t^{\gamma-1}
    \big\|{\uu(t) - \uufd(t)}\big\|_{\widetilde{H}^{\beta}(\Omega)}^2 \,dt}
    &\lesssim \log(t_0) \big(e^{-\omega \sqrt{\Nquad}} + \sqrt{\Nquad}\norm{u_0 - u_{h,0}}_{L^2(\Omega)}^2 + \sqrt{\Nquad}\,e^{-\omega \Ndof^\mu}\big).
  \end{align*}
  For $t<t_0$, it is easy to derive the stability estimates
  \begin{align*}
    \big\|{\uu(t)}\big\|_{\widetilde{H}^{\beta}(\Omega)}&\lesssim t^{-\gamma/2 +\varepsilon} \norm{u_0}_{H^{2\varepsilon}(\Omega)} \qquad \text{and}\qquad
    \big\|{\uqh(t)}\big\|_{\widetilde{H}^{\beta}(\Omega)}\lesssim t^{-\gamma/2 + \varepsilon}\norm{u_{0,h}}_{\HHx^{2\varepsilon}(\Omega)}
  \end{align*}
  from their representation formulas and the stability estimates on the resolvents in Lemma~\ref{lemma:resolvent_estimates}.

  Thus, the stated theorem follows if we pick $t_0 \sim e^{-(\omega/\varepsilon) \min(\sqrt{\Nquad},\, \Ndof^\mu)}$
  and absorb all polynomial terms into the exponential.  
\end{proof}
\begin{remark}
  In the case $\gamma=1$, the requirement that $u_{h,0}$ needs to be used for the discrete initial condition can be dropped. This 
  is because the space-time norm depends continuously on the $L^2$-norm of the initial condition.
\eremk
\end{remark}

\section{$hp$-FEM}
\label{sect:hp_fem}
In this section, we provide a construction for $\HHx$ that satisfies Assumption~\ref{ass:approx_of_HHx}. It is based on
an $hp$-finite element method on a suitably refined grid towards the boundary $\partial \Omega$. The construction is the same as in~\cite{hp_for_heat}
and has similarly also already appeared in~\cite{tensor_fem} in the context of stationary elliptic problems.
In \cite{tensor_fem_on_polygons}, the construction of~\cite{tensor_fem} is generalized to
polygonal domains.

For this, we need to make the following simplifying assumption throughout this section:

\begin{assumption}
  Let $\Omega \subset \R^d$ for $d=1,2$ have analytic boundary.
  Assume that $A$ and $c$ are  analytic on a neighborhood $\widetilde{\Omega} \supset \overline{\Omega}$.
\end{assumption}

Just as for the $hp$-quadrature in Section~\ref{sect:hp_quadrature}, the construction for $\HHx$ is based on a geometric grid that is 
refined towards the lower-dimensional manifolds where singularities are expected. In 1d, this means a geometric grid towards the end points,
for 2d we make the following definitions, following~\cite{MS98}, see also \cite{tensor_fem} and \cite[Def.~{2.4.4}]{melenk_book}.

We first introduce the (shape regular) reference mesh, used to resolve the geometry of $\Omega$.
\begin{definition}[reference mesh]
\label{def:reference-mesh}
  Let $\widehat{S}:=(0,1)^2$ be the reference square, and $\TT_{\Omega}:=\big\{K_i\big\}_{i=0}^{\abs{\TT_{\Omega}}}$  a mesh of curved quadrilaterals
  with bijective element maps $F_{K}: \overline{\widehat{S}} \to \overline{K}$ satisfying
  \begin{enumerate}[(M1)]
  \item The elements $K_i$ partition $\Omega$, i.e., $\bigcup_{K_i \in \TT} \overline{K_i} = \overline{\Omega}$;
  \item for $i\neq j$, $\overline{K}_i \cap \overline{K}_j$ is either empty, a vertex or an entire edge;
  \item the element maps $F_{K}:\widehat{S} \to K$ are analytic diffeomorphisms;
  \item the common edge of two neighboring elements $K_i$, $K_j$ has the same parametrization from both sides,
    i.e., if $\gamma_{ij}$ is the common edge with endpoints $P_1$, $P_2$, then for $P \in \gamma_{i,j}$ we have
    $$
    \operatorname{dist}(F_{K_i}^{-1}P,F_{K_i}^{-1} P_{\ell}))=\operatorname{dist}(F_{K_j}^{-1}P,F_{K_j}^{-1} P_{\ell}) \qquad \text{for } \ell=1,2.
    $$
  \end{enumerate}  
\end{definition}

In order to be able to resolve boundary layers on small scales, we now refine the reference mesh geometrically towards the boundary. This is captured
in the next definition.
\begin{definition}[anisotropic geometric mesh]
  \label{def:anisotropic_geometric_mesh}
  Let $\TT_{\Omega}$ be a reference mesh, and assume that $K_{i}$, $i=0,\dots, n < \abs{\TT_{\Omega}}$ are the elements at the boundary.
  Also assume that the left edge $e:=\{0\}\times (0,1)$ is mapped to $\partial \Omega$, i.e., $F_{K_i}(e) \subseteq \partial \Omega$
  and $F_{K_i}(\partial S \setminus \overline{e}) \cap \partial \Omega = \emptyset$ for  $i=0,\dots, n$.
  The remaining elements are taken to satisfy $\overline{K_i} \cap \partial \Omega = \emptyset$, $i=n+1,\dots, \abs{\TT_{\Omega}}$.

  For $L \in \N$ and a mesh grading factor $\sigma \in (0,1)$, we subdivide the reference square
  \begin{align*}
    \widehat{S}^0:=(0,\sigma^{L}) \times (0,1),\; \quad \widehat{S}^{\ell}:=(\sigma^{\ell},\sigma^{\ell-1})\times (0,1),\; \ell=1,\dots. L.
  \end{align*}

  The \emph{anisotropic geometric mesh} $\TT_{\Omega}^{L}$ is then given by the push-forwards of the refinements in the boundary region, plus the unrefined interior elements:
  $$
  \TT_{\Omega}^{L}:=\Big\{ F_{K_i}(\widehat{S}^{\ell}), \; \ell=0,\dots,L,\;\; i=0,\dots,n\big\} 
   \cup \bigcup_{i=n+1}^{\abs{\TT_{\Omega}}}{{\{} K_{i}{\}} }.
  $$  
\end{definition}

\begin{definition}
  In one dimension, for $\Omega=(-1,1)$, the reference mesh is given by the single element $\TT_{\Omega}:=\{(-1,1)\}$ and
  the anisotropic geometric mesh is given by the nodes
  \begin{align*}
    x_0&:=-1, \; x_i:=-1 + \sigma^{L - i +1}, \; i=1,\dots, L, \;\\
    x_i&:=1-\sigma^{i - L}, \; i=L+1,\dots, 2L, \;\quad  x_{2L+1}:=1.
  \end{align*}
  For general $\Omega=(a,b)$ it is given by an affine transformation of the mesh on $(-1,1)$.
\end{definition}

Using these meshes, we can now give an exemplary construction for $\HHx$, which satisfies Assumptions~\ref{ass:approx_of_HHx} and~\ref{ass:approx_of_unitial_condtiion}.
\begin{definition}[$\HHx$ via $hp$-FEM]
  Let $\TT_{\Omega}^L$ be an anisotropic geometric mesh refined towards $\partial \Omega$ and fix $p\in \N$.
  We write
    $
    \QQ^p:=\operatorname{span}_{0 \leq i_1, \,\dots,i_d\leq p }\big\{x_1^{i_1} \, \dots \,x_{d}^{i_d}\big\}
    $ for the space of tensor product polynomials and set
  \begin{align}
    \label{eq:def_hhx_for_fem}
    \HHx:=S^{p,1}_0(\TT^{L}_{\Omega}):=\Big\{ u \in H_0^{1}(\Omega): \; u\circ F_K \,\in\, \QQ^p \quad \forall K \in \TT_{\Omega}^{L} \big\}.
  \end{align}
\end{definition}

In~\cite{hp_for_heat}, it was shown that such spaces are able to reliably resolve small scales. We collect the result in the following proposition.
\begin{proposition}[{\cite[Thm.{3.33}]{hp_for_heat}}]
  \label{prop:hp_fem_resolves_scales}
  Let $\TT^{L}_{\Omega}$ be an anisotropic mesh on $\Omega$ that is geometrically refined towards $\partial \Omega$ with  grading factor $\sigma \in (0,1)$
  and $L$ layers. Use polynomial degree $p \sim L$.
  
  Then $\HHx$ defined in~\eqref{eq:def_hhx_for_fem} resolves the scales  down to $\sigma^{L}$, i.e.,
  there exist constants $C, \omega > 0$, such that
  for  $z \in \mathscr{S}$ with ${\abs{z}}^{-1/2}>\sigma^{L}$ 
  and  every $f$ that is analytic on a neighborhood $\widetilde{\Omega}$ of $\overline{\Omega}$, the solution $u_z$ to
  $(\LL - z)  u = z f$  can be approximated by $v_h \in \HHx$ satisfying
  \begin{align*}
    \abs{z}^{-1} \norm{\nabla u- \nabla v_h}_{L^2(\Omega)}^2 +\norm{u - v_h}^2_{L^2(\Omega)}
    &\leq C e^{- \omega' p} \leq C'  e^{-\omega \mathcal{N}_{\Omega}^{\frac{1}{d+1}}}.
  \end{align*}
  The constant $\omega$ depends only on $\sigma$, $\Omega$ and $\widetilde{\Omega}$. The constants $C$, $C'$ also depend
  on the constants of analyticity of $f$.
\end{proposition}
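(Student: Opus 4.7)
The plan is to recognize the equation $(\LL - z)u = zf$ as a singularly perturbed reaction-diffusion problem with perturbation parameter $\varepsilon := |z|^{-1/2}$, then apply the $hp$-approximation machinery developed in~\cite{tensor_fem, hp_for_heat} for such problems on anisotropically refined geometric meshes. Indeed, dividing by $z$ casts the equation as $-z^{-1}\LL u + u = f$, and Proposition~\ref{prop:resolvent_estimates_l2} with $\alpha \in \{0,1\}$ supplies precisely the stability estimates suggesting that the natural energy norm is $|z|^{-1}\|\nabla\cdot\|_{L^2}^2 + \|\cdot\|_{L^2}^2$, i.e.\ the quantity in the target inequality.

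The first key ingredient is a robust analytic regularity decomposition $u = u_{\mathrm{reg}} + u_{\mathrm{bl}}$, where $u_{\mathrm{reg}}$ is analytic on a fixed complex neighborhood of $\overline{\Omega}$ with uniform-in-$z$ analyticity constants inherited from $f$, and $u_{\mathrm{bl}}$ is a boundary-layer remainder which, tangentially to $\partial\Omega$, remains analytic, but whose normal derivatives grow like $|z|^{p/2}p!$, compensated by exponential pointwise decay like $e^{-\omega|z|^{1/2}\operatorname{dist}(x,\partial\Omega)}$ away from $\partial\Omega$. One constructs $u_{\mathrm{reg}}$ by extending $f$ analytically to $\widetilde{\Omega}$ and solving a modified problem for which boundary layers are absent, then defines $u_{\mathrm{bl}} := u - u_{\mathrm{reg}}$ and bounds it via the resolvent estimate. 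For complex $z \in \sector$ one reuses the rotation-of-test-direction trick from Proposition~\ref{prop:resolvent_estimates_l2} at each induction step.

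The second ingredient is elementwise $hp$-interpolation on $\TT_{\Omega}^L$. On the $\mathcal{O}(1)$-sized interior elements both components enjoy analytic regularity (for $u_{\mathrm{bl}}$ we use that $|z|^{-1/2} \geq \sigma^L$ forces exponential smallness in the interior), so standard tensor-product $\QQ^p$ approximation on curved quadrilaterals yields the desired $e^{-\omega p}$ in the target norm. On the boundary-strip elements $F_{K_i}(\widehat{S}^\ell)$ of anisotropic size $\sigma^\ell \times 1$, we pull back to $\widehat{S}$ and exploit the tensor structure separately: tangentially, the analytic regularity gives exponential convergence in $p$; normally, the one-dimensional geometric mesh with $L$ layers graded by $\sigma$ and degree $p \sim L$ is precisely the classical Schwab--Suri-type construction that approximates layer functions of width $\varepsilon \geq \sigma^L$ with an $\varepsilon$-robust error $e^{-\omega p}$. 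Squaring and summing elementwise contributions across $\mathcal{O}(L)$ boundary layers and $\mathcal{O}(1)$ interior elements yields the bound $e^{-\omega p}$ in the weighted energy norm. Since the total number of degrees of freedom satisfies $\Ndof \sim L\, p^d \sim p^{d+1}$, this translates to $e^{-\omega' \Ndof^{1/(d+1)}}$.

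The main obstacle is the robust analytic regularity result with uniform-in-$z$ control of the boundary-layer derivative bounds. The difficulty, compared with classical singularly perturbed reaction-diffusion analyses, is that $z$ is complex rather than real positive, so straightforward energy-based induction does not apply directly; instead one iteratively differentiates the equation in tangential coordinates and uses the complex-$z$ resolvent estimates of Proposition~\ref{prop:resolvent_estimates_l2} to control each differentiation step, combined with Cauchy estimates to pass to mixed derivatives on the analytic extension. Once this regularity statement is in hand, as already carried out in~\cite{hp_for_heat, tensor_fem}, the interpolation estimate reduces to elementwise bookkeeping.
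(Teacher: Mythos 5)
The paper does not prove this proposition at all: it is imported verbatim as \cite[Thm.~3.33]{hp_for_heat}, so there is no in-paper argument to compare against. Your outline is, however, a faithful reconstruction of the strategy used in that reference (and in the related constructions of \cite{tensor_fem}): identify $-z^{-1}\LL u + u = f$ as a reaction-diffusion problem that is singularly perturbed on the scale $\varepsilon = \abs{z}^{-1/2}$, establish a $z$-robust analytic regularity decomposition, and then approximate each piece on the anisotropic geometric mesh, with the DOF count $\Ndof \sim L\,p^d \sim p^{d+1}$ converting $e^{-\omega p}$ into $e^{-\omega \Ndof^{1/(d+1)}}$. One step of your sketch is looser than what the actual argument requires: defining $u_{\mathrm{bl}} := u - u_{\mathrm{reg}}$ and ``bounding it via the resolvent estimate'' cannot deliver the anisotropic pointwise derivative bounds ($\partial_n^{\,q}$-derivatives growing like $\abs{z}^{q/2} q!$ with decay $e^{-c\abs{z}^{1/2}\operatorname{dist}(x,\partial\Omega)}$) that the $hp$-approximation of the layer needs --- the resolvent estimate only controls energy norms. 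The cited works therefore use a \emph{three}-term decomposition $u = u_{\mathrm{reg}} + \chi\, u_{\mathrm{BL}} + r$, in which the boundary layer $u_{\mathrm{BL}}$ is constructed explicitly via an asymptotic expansion in boundary-fitted coordinates (this is where the tangential/normal derivative bounds come from), and only the remainder $r$ is estimated through the stability of the (complex-$z$) resolvent, where it is shown to be exponentially small in the energy norm and hence needs no approximation at all. With that correction your plan matches the proof of the cited theorem; the remaining content is indeed elementwise bookkeeping.
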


If we are interested in the estimate from Section~\ref{sect:spacetime_and_time_robust_estimates},
we also need that the $hp$-FEM space can approximate the initial condition in a way that is stable
in the discrete interpolation norm $\HHx^\theta$. Again following what was done in~\cite{hp_for_heat},
we split the construction into two steps, first computing the $H^1$-best approximation in a space without boundary  conditions (this can be done on the reference mesh)
and then performing a cutoff procedure to correct the boundary conditions on the anisotropic geometric mesh.
We first recall the properties of the cutoff operator.
\begin{proposition}[{\cite[Lem.~{3.35}]{hp_for_heat}}]
  \label{lemma:cutoff_operator}
  Let $\TT^L_{\Omega}$ denote an anisotropic geometric mesh with reference mesh $\TT_{\Omega}$.
  Given $\ell \in \N_0$, $\ell\leq L$, there exists a  linear operator $\mathscr{C}_{\ell}: \SS^{p,1}(\TT_{\Omega}) \to \SS^{p,1}_0(\TT^L_{\Omega})$ such that
  for $\theta \in [0,1/2)$
  \begin{align}
    \label{eq:stability_cutoff_operator}
    \norm{\mathscr{C}_{\ell} v}_{H^1(\Omega)} &\lesssim \sigma^{-\ell/2} \norm{v}_{L^2(\Omega)} + \norm{\nabla v}_{L^2(\Omega)} \quad\text{and}\quad   
                                                \norm{v - \mathscr{C}_{\ell} v}_{L^2(\Omega)} \lesssim \sigma^{{\theta\ell}} \norm{v}_{H^{\theta}(\Omega)}.
  \end{align}
\end{proposition}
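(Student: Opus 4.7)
The plan is to realize $\mathscr{C}_\ell$ as multiplication by an explicit cutoff function, followed by a bounded projection back onto the discrete space. In the reference square $\widehat{S}=(0,1)^2$, whose left edge is sent to $\partial\Omega$ by each boundary-element map $F_K$, I introduce the univariate piecewise linear cutoff $\widehat{\chi}_\ell(x_1):=\min(1,x_1/\sigma^{\ell})$. For $1\leq\ell\leq L$ the kink at $x_1=\sigma^\ell$ lies on a node of the anisotropic refinement, so $\widehat{\chi}_\ell$ is linear or constant on each subelement $\widehat{S}^j$. Pulling back via the element maps of boundary elements and extending by $1$ into the interior of $\Omega$, I obtain a continuous global $\chi_\ell\in\SS^{p,1}(\TT^L_\Omega)$ with $\chi_\ell|_{\partial\Omega}=0$. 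I then set $\mathscr{C}_\ell v:=\Pi_h(\chi_\ell v)$, where $\Pi_h$ is a fixed $H^1$-stable, $L^2$-bounded linear projection onto $\SS^{p,1}_0(\TT^L_\Omega)$: multiplication raises the degree of $v$ by at most one, $\chi_\ell v$ already vanishes on $\partial\Omega$, and such a $\Pi_h$ preserves the estimates below up to a $p$-uniform constant.

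For the $H^1$-stability bound, let $S_\ell$ denote the physical image of the strip $[0,\sigma^\ell]\times(0,1)$ under the boundary element maps. By shape regularity of $\TT_\Omega$, $S_\ell$ has width $\sim\sigma^\ell$ and $\|\nabla\chi_\ell\|_{L^\infty(S_\ell)}\lesssim\sigma^{-\ell}$. The product rule yields
\begin{align*}
\|\nabla(\chi_\ell v)\|_{L^2(\Omega)}\leq\|\nabla v\|_{L^2(\Omega)}+\sigma^{-\ell}\|v\|_{L^2(S_\ell)}.
\end{align*}
The crucial ingredient is a polynomial inverse inequality in the normal direction: since $v\circ F_K\in\QQ^p$ on each boundary element $K$, the function $g_K(x_1):=\int_0^1|v\circ F_K(x_1,x_2)|^2\,dx_2$ is a nonnegative univariate polynomial of degree $2p$, and norm equivalence on finite-dimensional spaces gives $\int_0^{\sigma^\ell}g_K\leq C(p)\sigma^\ell\int_0^1 g_K$. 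Summing over boundary elements and invoking bounded Jacobians of $F_K$ produces $\|v\|_{L^2(S_\ell)}\lesssim\sigma^{\ell/2}\|v\|_{L^2(\Omega)}$, and hence the first bound in~\eqref{eq:stability_cutoff_operator}.

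For the $L^2$-approximation estimate I use $v-\chi_\ell v=(1-\chi_\ell)v$, which is supported in $S_\ell$ with $|1-\chi_\ell|\leq 1$. Combined with the classical strip estimate $\|w\|_{L^2(S_\ell)}\lesssim\sigma^{\theta\ell}\|w\|_{H^\theta(\Omega)}$, valid precisely for $\theta\in[0,1/2)$ (below the trace threshold), this yields $\|(1-\chi_\ell)v\|_{L^2(\Omega)}\lesssim\sigma^{\theta\ell}\|v\|_{H^\theta(\Omega)}$. The $H^1$- and $L^2$-continuity of $\Pi_h$ then transfers both bounds from $\chi_\ell v$ to $\mathscr{C}_\ell v$.

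The main obstacle is the polynomial inverse inequality on the boundary strip: the factor $\sigma^{\ell/2}$ hinges on $v$ being polynomial on the \emph{coarse}, shape-regular mesh $\TT_\Omega$ --- a piecewise polynomial on the refined mesh $\TT^L_\Omega$ could concentrate entirely inside $S_\ell$ and defeat the estimate. A minor bookkeeping point is the continuity of $\chi_\ell$ across the interface between boundary and interior elements, which is automatic because $\widehat{\chi}_\ell(1,\cdot)=1$ matches the constant value prescribed on interior elements.
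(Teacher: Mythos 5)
The paper does not actually prove this proposition --- it is imported as a citation of \cite[Lem.~3.35]{hp_for_heat} --- so your argument can only be judged against what the statement requires. Your overall design (a piecewise-linear cutoff $\widehat{\chi}_\ell$ aligned with the nodes of the geometric refinement, the product rule, and a boundary-strip estimate) is the natural one, and your handling of the second bound is essentially sound: $\norm{v-\chi_\ell v}_{L^2(\Omega)} \leq \norm{v}_{L^2(S_\ell)} \lesssim \sigma^{\theta\ell}\norm{v}_{H^\theta(\Omega)}$ does hold for $\theta\in[0,1/2)$; it is the fractional Hardy inequality $\norm{\operatorname{dist}(\cdot,\partial\Omega)^{-\theta}v}_{L^2(\Omega)}\lesssim\norm{v}_{H^\theta(\Omega)}$ in disguise and deserves at least that one-line justification rather than the label ``classical''.

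The genuine gap is in the first bound. You write the inverse inequality as $\int_0^{\sigma^\ell}g_K\leq C(p)\,\sigma^\ell\int_0^1g_K$ and then silently absorb $C(p)$ into $\lesssim$. That constant is governed by the Christoffel function of $L^2(0,1)$ at the endpoint and grows like $p^2$, so what your argument actually yields is $\norm{v}_{L^2(S_\ell)}\lesssim p\,\sigma^{\ell/2}\norm{v}_{L^2(\Omega)}$ and hence $\norm{\chi_\ell v}_{H^1(\Omega)}\lesssim p\,\sigma^{-\ell/2}\norm{v}_{L^2(\Omega)}+\norm{\nabla v}_{L^2(\Omega)}$. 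The factor $p$ is not an artifact of your proof but of the construction itself: taking $v\circ F_K(x_1,x_2)=q(x_1)$ with $q$ the normalized degree-$p$ reproducing kernel at $x_1=0$ gives $|q(0)|^2\sim p^2\norm{q}_{L^2}^2$, and for $\sigma^\ell\ll p^{-2}$ (the relevant regime, since $\ell$ may be as large as $L\sim p$) one finds $\norm{\nabla(\chi_\ell v)}_{L^2(\Omega)}\gtrsim p\,\sigma^{-\ell/2}\norm{v}_{L^2(\Omega)}-\norm{\nabla v}_{L^2(\Omega)}$, so pure multiplication by $\chi_\ell$ overshoots the asserted bound by a factor of $p$. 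Under the paper's convention that $\lesssim$ hides constants independent of the polynomial degree --- and this uniformity is what makes Assumption~\ref{ass:approx_of_unitial_condtiion} and Lemma~\ref{lemma:inital_condition_in_right_space} meaningful --- the first half of \eqref{eq:stability_cutoff_operator} is therefore not established; the operator has to treat the boundary trace more carefully than a multiplicative cutoff, e.g.\ by subtracting $(1-\chi_\ell)$ times a discrete lifting of $v|_{\partial\Omega}$ and running the stability proof through trace estimates rather than a volume inverse inequality. A secondary, smaller issue: the projection $\Pi_h$ you invoke to repair the degree increase from $p$ to $p+1$ must be simultaneously $L^2$- and $H^1$-stable uniformly in $p$ on an anisotropic geometric mesh and must preserve the homogeneous trace; this can be arranged (tensorized Gauss--Lobatto interpolation), but it is asserted rather than argued, and the $L^2$ error bound does not simply ``transfer through'' such a projection without an additional approximation argument for $(\mathrm{I}-\Pi_h)(\chi_\ell v)$.
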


\begin{lemma}
  \label{lemma:inital_condition_in_right_space}
  Let $u_0$ be analytic in a neighborhood $\widetilde{\Omega} \supset \overline{\Omega}$, and let $0\leq \theta <1/2$.  
  Then there exists a function $u_{h,0} \in \HHx$ :
  \begin{align}
    \label{eq:inital_condition_in_right_space}
    \norm{u_{h,0}}_{\mathbb{V}_{h}^\theta} \lesssim \norm{u_0}_{H^{1}(\Omega)} 
    \qquad \text{ and } \qquad \norm{u_{h,0} - u_0}_{L^2(\Omega)} \lesssim e^{-\omega' p }.
  \end{align}

  In other words, if the number of refinement layers $L \sim p$, then
  $\HHx$ satisfies Assumption~\ref{ass:approx_of_unitial_condtiion}   with $\mu:=1/(d+1)$.
\end{lemma}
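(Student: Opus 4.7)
The plan is to carry out the two-step construction sketched just before the statement: first approximate $u_0$ on the unrefined reference mesh $\TT_{\Omega}$ in a space \emph{without} boundary conditions, then apply the cutoff operator $\mathscr{C}_L$ from Proposition~\ref{lemma:cutoff_operator} to correct the boundary values while landing in $\HHx = \SS^{p,1}_0(\TT^{L}_{\Omega})$. Concretely, because $u_0$ is analytic on $\widetilde{\Omega}$ and $\TT_{\Omega}$ has analytic element maps, classical $hp$-approximation on the reference mesh produces $\widetilde{u}_0 \in \SS^{p,1}(\TT_{\Omega})$ with $\norm{u_0 - \widetilde{u}_0}_{H^1(\Omega)} \lesssim e^{-\omega p}$, and in particular $\norm{\widetilde{u}_0}_{H^1(\Omega)} \lesssim \norm{u_0}_{H^1(\Omega)}$. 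I then set $u_{h,0} := \mathscr{C}_L \widetilde{u}_0 \in \HHx$.

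The $L^2$-approximation estimate in~\eqref{eq:inital_condition_in_right_space} falls out mechanically: fixing some $\theta' \in (0, 1/2)$ and combining Proposition~\ref{lemma:cutoff_operator} with the triangle inequality yields
\begin{align*}
  \norm{u_{h,0} - u_0}_{L^2(\Omega)}
  &\leq \norm{\mathscr{C}_L \widetilde{u}_0 - \widetilde{u}_0}_{L^2(\Omega)} + \norm{\widetilde{u}_0 - u_0}_{L^2(\Omega)} \\
  &\lesssim \sigma^{\theta' L} \norm{\widetilde{u}_0}_{H^{\theta'}(\Omega)} + e^{-\omega p},
\end{align*}
which is of order $e^{-\omega' p}$ once $L \sim p$.

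The stability bound $\norm{u_{h,0}}_{\HHx^\theta} \lesssim \norm{u_0}_{H^1(\Omega)}$ is the delicate step. A direct use of the multiplicative interpolation inequality~\eqref{eq:interpolation_est} on $u_{h,0}$ with $\norm{u_{h,0}}_{L^2} \lesssim \norm{u_0}_{H^1}$ and $\norm{u_{h,0}}_{H^1} \lesssim \sigma^{-L/2}\norm{u_0}_{H^1}$ only produces $\sigma^{-L\theta/2}\norm{u_0}_{H^1}$, which blows up exponentially in $p$. To circumvent this I would work directly from the K-functional defining $\HHx^{\theta}$: for each $t > 0$, instead of using only $u_{h,0}$ itself, test with the entire family of intermediate cutoffs $v_h := \mathscr{C}_s \widetilde{u}_0 \in \HHx$ with a free parameter $s \in \{0, \ldots, L\}$. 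Proposition~\ref{lemma:cutoff_operator} combined with the triangle inequality gives for every such $s$
\begin{align*}
  \norm{u_{h,0} - \mathscr{C}_s \widetilde{u}_0}_{L^2(\Omega)} \lesssim \sigma^{\theta' s} \norm{u_0}_{H^1(\Omega)},
  \qquad
  \norm{\mathscr{C}_s \widetilde{u}_0}_{H^1(\Omega)} \lesssim \sigma^{-s/2} \norm{u_0}_{H^1(\Omega)}.
\end{align*}
Balancing the two contributions in $s = s(t)$ produces a K-functional bound $K(u_{h,0},t) \lesssim t^{\alpha}\norm{u_0}_{H^1(\Omega)}$ in the intermediate regime, with $\alpha = 2\theta'/(2\theta'+1)$, while for very small and very large $t$ the trivial competitors $v_h = u_{h,0}$ and $v_h = 0$ take over. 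The defining integral $\int_0^\infty t^{-2\theta-1} K(u_{h,0},t)^2 \, dt$ is then uniformly bounded in $L$ as soon as $\alpha > \theta$, which can always be arranged by picking $\theta'$ close enough to $1/2$, since $\theta < 1/2$ forces $\theta/(2-2\theta) < 1/2$.

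The main obstacle is precisely this stability step: $\mathscr{C}_L$ itself is \emph{not} uniformly bounded on $\HHx^\theta$ as $L \to \infty$, so the required bound cannot come from an operator estimate alone and must instead be extracted from the specific structure of $u_{h,0}$ as the image of the uniformly $H^1$-bounded $\widetilde{u}_0$, by running the K-functional argument through the whole parametric family $\{\mathscr{C}_s\}_{0 \leq s \leq L}$ rather than only $\mathscr{C}_L$.
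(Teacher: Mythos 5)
Your proposal is correct and follows essentially the same route as the paper: the same two-step construction ($H^1$-best approximation $\Pi_{H^1}u_0$ on the unrefined reference mesh, then the cutoff $\mathscr{C}_L$), and the same key device for the $\HHx^\theta$-stability, namely using the whole family of intermediate cutoffs $\mathscr{C}_s\widehat{u}_0$ as level-dependent competitors in the K-functional (the paper realizes this as a piecewise-constant-in-$t$ function $v(t)=\mathscr{C}_\ell\widehat{u}_0$ on the geometric intervals $(\sigma^{\ell},\sigma^{\ell-1})$) and summing the resulting geometric series using $2\theta<1$. The only cosmetic difference is your choice of the balancing level $s(t)$, which is slightly sharper than the paper's $\sigma^s\sim t$ but leads to the same conclusion.
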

\begin{proof}
  A similar result has appeared in~\cite[Lem.~{3.36}]{hp_for_heat}, although in a more complicated setting.
  Let $\Pi_{H^1}: H^1(\Omega) \to \SS^{p,1}(\TT_{\Omega})$ denote the best approximation operator
    in the $H^1(\Omega)$-norm  (note that we do not enforce boundary conditions).
    Set $\widehat{u}_0:=\Pi_{H^1} u_0$.
  We define $u_{h,0}:=\mathscr{C}_{L} \widehat{u}_0 \in \HHx$ and the piecewise constant function $v(t) \in \HHx$ as
  \begin{align*}
    v(t):=\begin{cases}
    \mathscr{C}_{L} \widehat{u}_0 & t \in (0, \sigma^L), \\
    \mathscr{C}_{\ell} \widehat{u}_0 & t \in (\sigma^{\ell},\sigma^{\ell-1}), \; \ell=1,\dots,L, \\
    0 & t>1. 
  \end{cases}
  \end{align*}
  For $\varepsilon >0 $ sufficiently small, we then estimate using  \eqref{eq:stability_cutoff_operator}:
  \begin{align*}
    \norm{u_{h,0}}^2_{\mathbb{V}_{h}^\theta}
    &\leq
      \int_{0}^{1}{t^{-2\theta} \big( \norm{\mathscr{C}_{L} \widehat{u}_{0} - v(t)}^2_{L^2(\Omega)} + t^2 \norm{v(t)}^2_{H^1(\Omega)}\big)\,\frac{dt}{t}}
      + \frac{1}{2\theta}\|\mathscr{C}_L \widehat{u}_0\|_{L^2(\Omega)}^2
    \\
    &\stackrel{{\mathmakebox[\widthof{=}]{\eqref{eq:stability_cutoff_operator}}}}{\lesssim}
      \Big(\!1+\!\sum_{\ell = 0}^{L-1}{\sigma^{(1-\varepsilon) \ell} \int_{\sigma^{\ell+1}}^{\sigma^{\ell}}{t^{-2\theta -1}dt}}\Big)  \norm{\widehat{u}_{0}}^2_{H^{1/2-\frac{\varepsilon}{2}}(\Omega)}
      + \Big(\sum_{\ell = 0}^{L-1}{\sigma^{-\ell} \int_{\sigma^{\ell+1}}^{\sigma^{\ell}}{t^{-2\theta + 1} dt}}\Big)  \norm{\widehat{u}_{0}}^2_{H^1(\Omega)}
\\
    &\lesssim \Big(\!1\!+\sum_{\ell = 0}^{L-1}{\sigma^{\ell - 2\theta \ell - \varepsilon \ell}} \Big) \norm{\widehat{u}_{0}}^2_{H^{1/2}(\Omega)} +
      \Big(\sum_{\ell = 0}^{L-1}{\sigma^{-\ell + 2 \ell - 2\theta \ell}}\Big)  \norm{\widehat{u}_{0}}^2_{H^1(\Omega)}
    \lesssim \norm{\widehat{u}_0}_{H^1(\Omega)}^2,
  \end{align*}
  where in the last step we used the fact that $2\theta < 1$ and a geometric series.

  The statement then follows from the fact that the space  $\SS^{p,1}(\TT_{\Omega})$ can approximate analytic functions exponentially fast (see for example~\cite[Prop.~{3.2.21}]{melenk_book}),
  and the stability of the $H^1$-best approximation operator $\Pi_{H^1}$.  
\end{proof}

\begin{remark}
  The use of the $H^1$-norm on the right hand side of~\eqref{eq:inital_condition_in_right_space} is mostly for convenience. Using more involved
  results about interpolation spaces of polynomials, we expect that this requirement can be lowered to the $H^{\beta+\varepsilon}(\Omega)$-norm.
\eremk
\end{remark}
Collecting the previous results, we arrive at the pointwise error estimate:
\begin{corollary}
  \label{hp_pointwise_estimates}
  Assume that $u_0$ is analytic on a neighborhood of $\overline{\Omega}$ and assume that $f$ is uniformly analytic.

  Use $\Nquad \in \N$ quadrature points for the sinc-quadrature with $\kquad \sim 1/\sqrt{\Nquad}$,
  $\Nhpquad\sim \sqrt{\Nquad}$ layers and $p = \Nhpquad$ for the $hp$-quadrature.
  Let $\TT_{\Omega}^L$ be an anisotropic geometric grid with grading factor $\sigma \in (0,1)$ such that
  $L \gtrsim \sqrt{\Nquad}$ and use the polynomial degree $p \sim L$
    for $\HHx$.

  Then, the following estimate holds for a constant $\omega  >0$:
  \begin{align*}
    \norm{\uu(t) - \uufd(t)}_{\widetilde{H}^{\beta}(\Omega)}
    &\lesssim \big(t^{-\gamma/2}+ t^{\gamma/2}\big)\,\big(e^{-\omega \sqrt{\Nquad}} + \,e^{-\omega \Ndof^{\frac{1}{d+1}}}\big), 
  \end{align*}  
\end{corollary}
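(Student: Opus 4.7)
The plan is to obtain Corollary~\ref{hp_pointwise_estimates} as a direct specialization of Theorem~\ref{thm:the_big_theorem}: all the heavy lifting has already been done abstractly, so what remains is to verify that the concrete $hp$-FEM space $\HHx = S^{p,1}_0(\TT^L_\Omega)$ of \eqref{eq:def_hhx_for_fem} satisfies Assumption~\ref{ass:approx_of_HHx} at the scale demanded by Theorem~\ref{thm:the_big_theorem}, and then to translate the rate expressed in the (still abstract) $\Ndof^\mu$ into one in terms of the FEM parameters $L$ and $p$.

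First I would invoke Proposition~\ref{prop:hp_fem_resolves_scales}: with $p \sim L$, the space $S^{p,1}_0(\TT^L_\Omega)$ resolves all scales down to $\sigma^L$ with the exponential rate $e^{-\omega' p} \leq C' e^{-\omega \Ndof^{1/(d+1)}}$; the equivalence uses the dimension count $\Ndof \sim L p^d$ for the anisotropic tensor-product space on a mesh that is refined geometrically towards $\partial \Omega$ in the single direction transverse to the boundary, which yields $\Ndof \sim L^{d+1}$ under $p \sim L$, so $\mu = 1/(d+1)$. Theorem~\ref{thm:the_big_theorem} requires resolution down to $b^{-1/2} e^{-(\kappa/2)\sqrt{\Nquad}}$; since
\begin{align*}
\sigma^L \leq b^{-1/2} e^{-(\kappa/2)\sqrt{\Nquad}} \iff L \log(1/\sigma) \geq (\kappa/2)\sqrt{\Nquad} + \tfrac{1}{2}\log b^{-1},
\end{align*}
the hypothesis $L \gtrsim \sqrt{\Nquad}$, with implicit constant sufficiently large relative to $\kappa/\log(1/\sigma)$, secures this for all sufficiently large $\Nquad$ (and hence, after adjusting constants, for all $\Nquad \in \N$).

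With Assumption~\ref{ass:approx_of_HHx} now in force for the chosen space, Theorem~\ref{thm:the_big_theorem} yields
\begin{align*}
\|\uu(t) - \uufd(t)\|_{\widetilde H^\beta(\Omega)} \lesssim (t^{-\gamma/2} + t^{\gamma/2})\bigl(e^{-\omega \sqrt{\Nquad}} + \sqrt{\Nquad}\, e^{-\omega \Ndof^{1/(d+1)}}\bigr),
\end{align*}
and the stray $\sqrt{\Nquad}$ prefactor is then absorbed into the exponential at the price of a slightly smaller $\omega$ (using $\sqrt{\Nquad} \,e^{-\omega \Ndof^{1/(d+1)}} \lesssim e^{-\omega'' \Ndof^{1/(d+1)}}$, which is valid because $\Ndof^{1/(d+1)} \gtrsim \sqrt{\Nquad}$ thanks to $L \gtrsim \sqrt{\Nquad}$). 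I do not anticipate any genuine obstacle: the argument is purely bookkeeping, and the only point requiring a bit of care is the compatibility of the constants in "$L \gtrsim \sqrt{\Nquad}$", "$p \sim L$", "$\kquad = \kappa/\sqrt{\Nquad}$", and the mesh grading factor $\sigma$, so that both the scale condition and the dimension count align to deliver a single common exponential rate $e^{-\omega \sqrt{\Nquad}}$ matched to $e^{-\omega \Ndof^{1/(d+1)}}$.
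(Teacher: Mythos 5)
Your proposal is correct and follows exactly the paper's own (very short) proof: invoke Theorem~\ref{thm:the_big_theorem}, verify Assumption~\ref{ass:approx_of_HHx} via Proposition~\ref{prop:hp_fem_resolves_scales}, and absorb the $\sqrt{\Nquad}$ prefactor into the exponential. Your additional bookkeeping (the comparison $\sigma^L \leq b^{-1/2}e^{-(\kappa/2)\sqrt{\Nquad}}$ and the observation $\Ndof^{1/(d+1)} \gtrsim \sqrt{\Nquad}$ justifying the absorption) merely makes explicit what the paper leaves implicit.
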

\begin{proof}
  We apply Theorem~\ref{thm:the_big_theorem}. By Proposition~\ref{prop:hp_fem_resolves_scales},
  the space $\HHx$ satisfies the necessary Assumption~\ref{ass:approx_of_HHx}.
  The factor $\sqrt{\Nquad}$ from Theorem~\ref{thm:the_big_theorem} is absorbed into the exponential for convenience.
\end{proof}

The estimate for the space-time energy norm takes the following form:
\begin{corollary}
  Fix $T>0$, assume that
  $u_0$ is analytic on a neighborhood $\widetilde{\Omega}$ of $\overline{\Omega}$ and assume that $f$ is uniformly analytic.
  Use $\Nquad \in \N$ quadrature points for the sinc-quadrature with $\kquad \sim 1/\sqrt{\Nquad}$,
  $\Nhpquad\sim \sqrt{\Nquad}$ layers and degree $p\sim \Nhpquad$ for the hp-quadrature.
  Let $\TT_{\Omega}^L$ be an anisotropic geometric grid with grading factor $\sigma \in (0,1)$ such that
  $L \gtrsim \frac{\sqrt{\Nquad}}{\abs{\ln(\sigma)}}$,
   and use the polynomial degree $p \sim L$
    for $\HHx$.
  
  Let $u_{h,0}$ be given by $u_{h,0}:=\CC_{L} \Pi_{H^1} u_0$ where $\CC_{L}$ is the cutoff operator from Proposition~\ref{lemma:cutoff_operator} and
  $\Pi_{H^1}$ is the $H^1$-orthogonal projection onto the space $\SS^{p,1}(\TT_\Omega)$.
  
  Then, the following estimate holds for a constant $\omega >0$:
  \begin{align*}
    \int_{0}^{T}{t^{\gamma-1}
    \big\|{\uu(t) - \uufd(t)}\big\|_{\widetilde{H}^{\beta}(\Omega)}^2 \,dt}
    &\lesssim C(T) \big( e^{-\omega \sqrt{\Nquad}} + e^{-\omega \Ndof^{\frac{1}{d+1}}}\big).
  \end{align*}
  The constants depends on the end time $T$, the domain $\Omega$, $\widetilde{\Omega}$, $\mathcal{O}$, the data $u_0$ and $f$, the constants from Assumption~\ref{ass:approx_of_HHx} and~\ref{ass:approx_of_unitial_condtiion}
  and on the details of the discretization like mesh grading or the ratio $\kquad/\sqrt{\Nquad}$, but is independent of the accuracy parameters $\Nquad$, $\kquad$, $\Nhpquad$ or $\Ndof$.
\end{corollary}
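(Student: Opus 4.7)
The plan is to derive this corollary directly from Theorem~\ref{thm:spacetime_estimates} by checking its three hypotheses on the concrete $hp$-FEM space $\HHx:=S_0^{p,1}(\TT_\Omega^L)$ and on the discrete initial datum $u_{h,0}=\CC_L \Pi_{H^1} u_0$. Concretely, I need (a) that $\HHx$ satisfies Assumption~\ref{ass:approx_of_HHx} and actually resolves scales down to $b^{-1/2} e^{-(\kappa/2)\sqrt{\Nquad}}$; (b) that $u_{h,0}$ satisfies Assumption~\ref{ass:approx_of_unitial_condtiion}; (c) that the convergence rates combine into the form written in the statement. Once these are in place, the result is a direct substitution into Theorem~\ref{thm:spacetime_estimates}.

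For~(a), I will invoke Proposition~\ref{prop:hp_fem_resolves_scales}, which says that $\SS_0^{p,1}(\TT_\Omega^L)$ with $p\sim L$ resolves all scales down to $\sigma^L$ and gives exponential convergence at rate $e^{-\omega \Ndof^{1/(d+1)}}$, since $\Ndof \sim L \cdot p^d \sim p^{d+1}$. The required resolution scale $b^{-1/2}e^{-(\kappa/2)\sqrt{\Nquad}}$ translates, after taking logarithms, into the condition $L\,\abs{\ln\sigma} \gtrsim (\kappa/2)\sqrt{\Nquad} + \tfrac{1}{2}\ln b$, which for large enough $\Nquad$ is exactly the hypothesis $L \gtrsim \sqrt{\Nquad}/\abs{\ln\sigma}$ imposed in the corollary. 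For~(b), Lemma~\ref{lemma:inital_condition_in_right_space} shows that the prescribed $u_{h,0}=\CC_L \Pi_{H^1} u_0$ is stable in $\HHx^\theta$ for any $\theta\in[0,1/2)$ and satisfies $\|u_0 - u_{h,0}\|_{L^2(\Omega)}\lesssim e^{-\omega' p}$. Since $p\sim L \gtrsim \sqrt{\Nquad}$, up to adjusting $\omega$ this approximation error fits into $e^{-\omega \Ndof^{1/(d+1)}}$, i.e., Assumption~\ref{ass:approx_of_unitial_condtiion} holds with $\mu = 1/(d+1)$.

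For~(c), substituting (a) and (b) into Theorem~\ref{thm:spacetime_estimates} yields a bound of the form
\begin{equation*}
C(T)\bigl(e^{-\omega\sqrt{\Nquad}} + \sqrt{\Nquad}\,e^{-\omega \Ndof^{1/(d+1)}}\bigr).
\end{equation*}
The polynomial prefactor $\sqrt{\Nquad}$ is then absorbed into the exponential by slightly decreasing the constant $\omega$, giving precisely the stated bound. There is essentially no obstacle here; the only mildly delicate point is the bookkeeping of the constants when translating the abstract scale-resolution assumption into the algebraic condition on $L$, and tracking that the choice $\mu = 1/(d+1)$ coming from Proposition~\ref{prop:hp_fem_resolves_scales} and Lemma~\ref{lemma:inital_condition_in_right_space} is consistent in both Assumptions~\ref{ass:approx_of_HHx} and~\ref{ass:approx_of_unitial_condtiion}, so that a single exponent appears in the final estimate.
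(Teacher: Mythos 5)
Your proposal is correct and follows exactly the paper's route: the paper's proof is a one-line appeal to Theorem~\ref{thm:spacetime_estimates}, with the hypotheses on $\HHx$ and $u_{h,0}$ verified by Proposition~\ref{prop:hp_fem_resolves_scales} and Lemma~\ref{lemma:inital_condition_in_right_space}, which is precisely what you do. Your additional bookkeeping (translating the scale-resolution requirement into the condition on $L$ via logarithms, and absorbing the $\sqrt{\Nquad}$ prefactor into the exponential) is a correct and slightly more explicit account of the same argument.
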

\begin{proof}
  Follows from~\ref{thm:spacetime_estimates}. The necessary assumptions on $\HHx$ and $u_{h,0}$ are satisfied by Proposition~\ref{prop:hp_fem_resolves_scales} and Lemma~\ref{lemma:inital_condition_in_right_space}.
\end{proof}

\section{Numerical Results}
In order to confirm our theoretical findings, we implemented the method using the NGSolve software package~\cite{ngsolve,ngsolve2} for the finite element discretization in $\Omega$.
The geometric meshes in 2d were generated using the $hp$-refinement feature of Netgen, the integrated mesh generator of NGSolve. A sample mesh can be seen in Figure~\ref{fig:mesh}. We note that these meshes
include geometric refinements towards each of the corners of the domain.

\begin{figure}
  \center  
  \includeTikzOrEps{mesh.tex}
  \caption{Example of a geometric mesh with 5 refinement layers.}
\label{fig:mesh}
\end{figure}

\begin{remark}
When implementing the presented method in practice, the dominant cost is to numerically solve the singularly perturbed problems $(z_j-\LL)^{-1}$.
Observing that these problems only depend on the quadrature point, and therefore appear several times throughout the algorithm, most notably inside the $hp$-quadrature for the time convolution,
it is therefore beneficial to reorder the operations in order to minimize the number of systems that need to be solved.
This leads to a method which only requires $\Nquad$ linear system solves, making it very efficient.
\eremk
\end{remark}

\begin{example}
  \label{example_smooth}
  As a first example, we consider the unit square $\Omega=(0,1)^2$ and fix the end time $t=1$.
  We set the parameters $\beta:=0.75$ and $\gamma:=0.6$.
  In order to validate our implementation, we choose the initial condition and right-hand side in a way that gives rise to a known exact solution.
  Namely, similarly to \cite{bonito_pasciak_parabolic}, we set $u(t,x,y):=e_{\gamma,1}(-t^{\gamma} (8\pi^2)^{\beta})\sin(2\pi x) \sin(2\pi y)+t^{3} \sin(\pi x) \sin(\pi y)$. This means the data are given by
  \begin{align*}
    u_0(x,y)&:=\sin(2\pi x) \sin(2\pi y), \quad \text{and}\quad
    f(t,x,y):=\Bigg(\frac{\Gamma(4)}{\Gamma(4-\gamma)} t^{3-\gamma} + t^{3}(2\pi^2)^{\beta} \Bigg) \sin(\pi x) \sin(\pi y).    
  \end{align*}  
  We look at the convergence of the $L^2(\Omega)$-error as we increase the polynomial degree $p$ used for our finite element discretization,
  the number of geometric refinements in the underlying grid as well as our
  quadrature parameters $\Nquad:=6\, p^2$, $\kquad:=\pi\sqrt{1/(5 \beta \Nquad)} \sim 1/p$
  and $\Nhpquad:=p$. The grading factor $\sigma:=0.125$ is used for both the geometric
  mesh on $\Omega$ and the $hp$-quadrature.

  Since we are working in $2d$ and we are dealing with a geometry with corners
    that need to be resolved,
    the number of degrees of freedom scale like $\Ndof\sim p^4$. We plot
  the error compared to $\Ndof^{1/4}$. As expected, we observe exponential convergence in this very simple case of a known smooth exact solution; see Figure~\ref{fig:smooth_exact_solution}.
\end{example}
\begin{figure}
  \begin{subfigure}{0.5\textwidth}
  \includeTikzOrEps{l2error_smooth}
  \caption{Compatible data; Example~\ref{example_smooth}}
  \label{fig:smooth_exact_solution}
\end{subfigure}
\begin{subfigure}{0.5\textwidth}
    \includeTikzOrEps{l2error_sing}
  \caption{Incompatible data; Example~\ref{example_sing}}
  \label{fig:singular_exact_solution}
\end{subfigure}
\caption{Convergence at $t=1$ in the $L^2(\Omega)$-norm for compatible and incompatible data}
\end{figure}

\begin{remark}
  The choice of $\kquad$ is motivated by~\cite[Rem.~{4.1}]{blp17},
  where the optimal choice is given by
  $
  k=\sqrt{\frac{\pi H}{\beta N}},
    $
    where $H<\pi/4$ is determined by the region of analyticity in Lemma~\ref{lemma:hom:g_lambda_estimates}.
    In our case, we used $H:=\frac{\pi}{5}$.
\eremk
  \end{remark}

\begin{example}
  \label{example_sing}
  We continue with the unit square as the computational domain.
  But now, we take an initial condition and right-hand side that violates the boundary condition. This will lead to the formation of singularities.
  Namely, we fix $u_0:\equiv 1$ and $f(x,y,t):=\sin(t)$, and choose $\gamma=\sqrt{2}/2$ and $\beta:=\sqrt{3}/3$.
  We plot the convergence of the $L^2(\Omega)$-error as we increase the polynomial degree $p$ for different end times $t$. Again, we keep the other
  discretization parameters proportional, this time using
    $\Nquad:=6\, p^2$, $\kquad:=\pi\sqrt{1/(5 \beta \Nquad)} \sim 1/p$, $\Nhpquad:=p$, and
    again setting $\sigma:=0.125$ for all geometric grids.
  Since the exact solution is not known, we used the approximation on the finest grid as our reference solution.

  As expected, we again observe exponential convergence
  with respect to $\Ndof^{1/3}\sim p$, confirming that our numerical method can resolve all the appearing singularities.
\end{example}

\textbf{Acknowledgments:}
The authors gladly acknowledge financial support by the Austrian Science Fund (FWF)
through the projects SFB65~(A.R. and J.M.M) and P29197-N32 (A.R).

\bibliographystyle{alphaabbr}
\bibliography{literature}

\end{document}